\documentclass[12pt,twoside]{amsart}
\usepackage{a4wide}
\usepackage{microtype}

\usepackage{mathrsfs}
\usepackage{mathtools}
\usepackage{amssymb}
\usepackage{verbatim}
\usepackage{amsmath}
\usepackage{comment}
\usepackage{amsthm,thmtools,xcolor}
\usepackage[colorlinks,linkcolor=blue,citecolor=blue, pdfstartview=FitH]
{hyperref}
\usepackage{backref}
\usepackage{hyperref}

\usepackage{bm}
\usepackage[latin1]{inputenc}
\usepackage[T1]{fontenc}
\usepackage{times}
\usepackage{hyperref}
\usepackage{amssymb,latexsym}

\usepackage{mathrsfs}
\usepackage{mathtools}
\usepackage{amssymb}
\usepackage{verbatim}
\usepackage{amsmath}
\usepackage{comment}
\usepackage{amsthm,thmtools,xcolor}
\usepackage[colorlinks,linkcolor=blue,citecolor=blue, pdfstartview=FitH]
{hyperref}
\usepackage{backref}
\setlength{\textwidth}{5.8in} \setlength{\oddsidemargin}{0.3in}
\setlength{\evensidemargin}{0.3in}\setlength{\footskip}{0.3in}
\setlength{\headsep}{0.25in}
\usepackage{bm}
\usepackage{a4wide}
\usepackage[latin1]{inputenc}
\usepackage[T1]{fontenc}
\usepackage{times}
\usepackage{hyperref}
\usepackage{amssymb,latexsym}

\usepackage{etoolbox}

\newtheorem{theorem}{Theorem}[section]

\numberwithin{equation}{section}


\newtheorem{proposition}[theorem]{Proposition}

\newtheorem{corollary}[theorem]{Corollary}
\newtheorem{lemma}[theorem]{Lemma}

\theoremstyle{definition}
\newtheorem{definition}[theorem]{Definition}

\theoremstyle{remark}

\usepackage{enumerate}

\newcommand{\dbar}{\ensuremath{\overline\partial}}

\newcommand{\R}{\ensuremath{\mathbb{R}}}

\newcommand{\abs}[1]{\lvert#1\rvert}


\begin{document}
	
	\title[Hilbert bundle approach to complex Brunn-Minkowski theory]{A Hilbert bundle approach to the sharp strong openness theorem and the Ohsawa-Takegoshi extension theorem}
	
	\author{Tai Terje Huu Nguyen}
	\address{DEPARTMENT OF MATHEMATICAL SCIENCES, NORWEGIAN UNIVERSITY OF SCIENCE AND TECHNOLOGY, NO-7491 TRONDHEIM, NORWAY (current:
	Department of Business, Strategy and Political Sciences, University of South Eastern Norway
Hasbergsvei 36, 3616 Kongsberg, Norway)}
	\email{taiterje@hotmail.com}
	\thanks{It is a pleasure to thank Bo Berndtsson, Bo-Yong Chen and Takeo Ohsawa for
		reading an early version of this paper and their valuable comments. Thanks are also given to the anonymous referees for valuable suggestions that greatly improved the paper, in particular, for finding a mistake in our first proof of Theorem 5.1 and the counterexample in the remark to Lemma 5.2.}

	\author{Xu Wang}
	\address{DEPARTMENT OF MATHEMATICAL SCIENCES, NORWEGIAN UNIVERSITY OF SCIENCE AND TECHNOLOGY, NO-7491 TRONDHEIM, NORWAY}
	\email{xu.wang@ntnu.no}
	
	\subjclass[2020]{Primary 32A25; Secondary 32Q05}
	\date{\today}
	
	
	\keywords{Hilbert bundle, complex Brunn-Minkowski theory, sharp strong openness, sharp Ohsawa-Takegoshi estimates}
	
	\begin{abstract} The following paper is around parts of the first named author's thesis. We discuss (what we call) a Hilbert bundle approach to complex Brunn-Minkowski theory and obtain a general monotonicity theorem. As two applications, we prove a generalization of Guan's sharp strong openness theorem and a sharp Ohsawa-Takegoshi extension theorem. A second proof of Guan-Zhou's strong openness theorem using a Donnelly-Fefferman estimate is also given. 

	\end{abstract}
	
	\maketitle
	
	

	\section{Introduction}
	
	In this paper, we are interested in using techniques from Berndtsson's complex Brunn-Minkowski theory to discuss sharp estimates in the strong openness theorem and the Ohsawa-Takegoshi extension theorem. The Ohsawa-Takegoshi extension theorem, which first appeared in \cite{OT}, is a very powerful result, and much studied topic, in complex geometry. In  
	\cite{Ohsawa}, Ohsawa found that the sharp version of the theorem would directly give the Suita conjecture (see \cite{Suita}), on a precise Robin constant lower bound for the Bergman kernel (of a plane domain). The sharp version was finally proved by Blocki (for pseudoconvex domains in $\mathbb C^n$) and Guan-Zhou (a very general form) in \cite{Bl0, GZ}. A completely different proof was given by Berndtsson-Lempert in \cite{BL} using complex Brunn-Minkowski theory, more specifically, using the convexity theorem of Berndtsson in \cite{Bern06, Bern09}. Another application of Berndtsson's convexity theorem is a proof of the openness conjecture, given by Berndtsson in \cite{Bern15b}. In the same paper (section 5, in \cite{Bern15b}), Berndtsson also proposed a conjectural picture for the strong openness conjecture using the convexity theorem. Later, in \cite{GZ0}, Guan-Zhou used the Ohsawa-Takegoshi extenstion theorem to prove the strong openness conjecture. In \cite{Bern20}, Berndtsson proved a compact K\"ahler case of his conjecture from \cite{Bern15b}, which can be viewed as a global case of the strong openness problem. The main difficulty in proving the conjecture of Berndtsson in the more general case is to obtain an infinite dimensional version of Lemma 2.3 in \cite{Bern20}. This is still an open problem, as far as we know. In this paper, we shall instead get by by using an $L^2$ H\"ormander-type of result that we also prove. We have chosen to name the latter "the Jet selection theorem"  \footnote{Thanks to one of the referees for suggesting the name "Jet selection theorem" to us}. The Jet selection theorem (see Theorem \ref{th:L21} in section 2)  allows us to generalize Berndtsson's method in \cite{Bern20} to the non-compact case and prove a general monotonicity result, which is the main theorem (see Theorem A in section 2) in this paper. As two applications of it, we prove a generalization of Guan's sharp version \cite{G} of the strong openness theorem  (see Theorem B in section 2) and a sharp Ohsawa-Takegoshi theorem (see Theorem C in section 2). 
	
	We end our short introduction with the organization of the paper. In section 2, we present (the statements of) our Jet selection theorem and the main theorem. We also discuss in there applications of the main theorem to the sharp strong openness theorem and the sharp Ohsawa-Takegoshi theorem. In section 3, we introduce (what we call) a Hilbert bundle approach to the complex Brunn-Minkowski theory and prove our main theorem, assuming the statement of the Jet selection theorem. In section 4, we prove the Jet selection theorem in a simple setting and give the reference for the proof of the general case. The last section of this paper, section 5, is on further applications and results. We in there give a proof of the Demailly-Koll\'ar semi-continuity theorem using the Jet selection theorem, and a second proof of Guan-Zhou's strong openness theorem using a Donnelly-Fefferman version of the Jet selection theorem.

	\section{Jet selection theorem,  main theorem and applications}
	In this section, we give the statements of the Jet selection theorem (Theorem \ref{th:L21}) and the main theorem (Theorem A), and discuss applications of the latter to sharp strong openness and sharp $L^2$-holomorphic extension. We commence by giving the set-up for the statements the theorems.
	\subsection{Quasi-complete K\"ahler manifolds and the Jet selection theorem}
	 Let $X$ be a complex manifold, and denote by $C_0^\infty (X, [0,1])$ the space of smooth maps from $X$ to $[0,1]$ with compact support in $X$. We will be dealing with the following class of K\"ahler manifolds, which comes from the main result in \cite{CWW} (see also section 6.1 in \cite{wang}); the support of a function $\chi$ is denoted $\text{supp }\chi$:
	
	\begin{definition}\label{de:qck} Let $(X, \omega)$ be a K\"ahler manifold. Then we say that it is  (a) quasi-complete K\"ahler (manifold) if there exist a subfamily $\{\chi_{j}\}_{j\geq 1} \subset C_0^\infty (X, [0,1])$, and K\"ahler open neighborhoods $(X_j, \omega_j)$ of ${\rm supp} \,\chi_j$ such that:	
\begin{enumerate}[(i)]
\item $X_{j}\subset X_{j+1}$ with $X=\bigcup_{j=1}^{\infty}X_{j}$,
\item $\omega_{j}\geq \omega$ on $X_{j}$,
\item 	$$\lim_{j\to \infty}  \sup_{X}|\dbar\chi_j|_{\omega_j} =0,$$ for every compact subset $K$ of $X$, $$ \lim_{j\to \infty} \sup_{K} |\omega_j-\omega|_{\omega} =0,$$ and
\item for every compact subset $K$ of $X$, $$\chi_j|_{K}\equiv 1$$ for all large $j$.
\end{enumerate}	In the case that (i) - (iv) hold, we also call $(X_j, \omega_j, \chi_j)$ an approximation family for $(X, \omega)$.
		\end{definition}
		
\noindent \textbf{Remark.} \emph{For simplicity, we shall use the abbreviation qck for ''quasi-complete K\"ahler''. }\\
	
	Recall that a K\"ahler manifold $(X, \hat \omega)$ is said to be complete if there exists a real-valued smooth function $\rho$ on $X$ such that
		for every $c\in \mathbb R$, the sub-level set 
		$\{\rho<c\}
		$
		is relatively compact in $X$, and $
		|\dbar \rho|_{\hat\omega} \leq 1
		$ on $X$. One of our main motivations for introducing qck manifolds is the following fact:
	
	\begin{proposition}\label{pr:qck} Assume that $(X, \hat \omega)$ is complete K\"ahler. Then $(X, \omega)$ is qck for every K\"ahler form $\omega$ on $X$.
	\end{proposition}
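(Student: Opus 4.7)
The plan is to build the approximation family from the smooth exhaustion $\rho$ supplied by the completeness of $(X,\hat\omega)$ together with a vanishing perturbation of $\omega$ by $\hat\omega$. First I will fix sequences $a_j \nearrow \infty$ and $b_j > a_j$ with $b_j - a_j \to \infty$, and smooth cutoffs $\theta_j : \mathbb R \to [0,1]$ that equal $1$ on $(-\infty,a_j]$ and $0$ on $[b_j,\infty)$, arranged so that $\sup|\theta_j'| \le C/(b_j-a_j)$. Setting $\chi_j := \theta_j\circ\rho$ and $X_j := \{\rho < b_j+1\}$, the exhaustion property of $\rho$ yields $X_j \Subset X_{j+1}$ with $X = \bigcup_j X_j$, $\mathrm{supp}\,\chi_j \subset \{\rho\le b_j\} \subset X_j$, and $\chi_j\equiv 1$ on any prescribed compact set once $j$ is large.

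Next I will pick $\epsilon_j \searrow 0$ with $(b_j-a_j)^2\,\epsilon_j \to \infty$ (for instance $b_j-a_j = j^2$, $\epsilon_j = 1/j$) and put $\omega_j := \omega + \epsilon_j\,\hat\omega$ on $X_j$. This is K\"ahler as a sum of K\"ahler forms, and visibly $\omega_j \ge \omega$. For a compact $K \subset X$ we have $|\omega_j-\omega|_\omega = \epsilon_j |\hat\omega|_\omega \to 0$ uniformly on $K$, since $|\hat\omega|_\omega$ is locally bounded. For the cutoff estimate, $\dbar\chi_j = \theta_j'(\rho)\,\dbar\rho$, and the bound $\omega_j \ge \epsilon_j \hat\omega$ reverses on $(0,1)$-forms to give $|\cdot|_{\omega_j}^2 \le \epsilon_j^{-1}|\cdot|_{\hat\omega}^2$, whence
$$
\sup_X |\dbar\chi_j|_{\omega_j}^2 \;\le\; \frac{\sup|\theta_j'|^2}{\epsilon_j}\,\sup_X |\dbar\rho|_{\hat\omega}^2 \;\le\; \frac{C^2}{(b_j-a_j)^2\,\epsilon_j} \longrightarrow 0.
$$
All conditions in Definition~\ref{de:qck} are then verified.

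The main (and only real) difficulty is the tension between the requirement $\epsilon_j \to 0$ (needed for $\omega_j \to \omega$ on compacts) and the $\epsilon_j^{-1}$ factor appearing in the cutoff bound. The remedy is to let the transition region $[a_j,b_j]$ of $\theta_j$ be long enough that $\sup|\theta_j'|$ decays faster than $\sqrt{\epsilon_j}$; this is exactly the point at which the completeness hypothesis is used, through the existence of a smooth exhaustion $\rho$ with $|\dbar\rho|_{\hat\omega}\le 1$.
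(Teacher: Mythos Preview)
Your proof is correct and follows essentially the same approach as the paper: perturb $\omega$ by a shrinking multiple of $\hat\omega$ to form $\omega_j$, and build cutoffs $\chi_j$ from the exhaustion $\rho$ with transition zones long enough that the $\epsilon_j^{-1}$ loss in the $(0,1)$-form norm is absorbed. The paper's version is more streamlined in two ways: it simply takes $X_j = X$ for all $j$ (there is no need to pass to sublevel sets of $\rho$, since $\omega_j$ is globally defined), and it makes the explicit choices $\omega_j = \omega + j^{-1}\hat\omega$ and $\chi_j = \chi(\rho/j^2)$ for a single fixed cutoff $\chi$ with $|\chi'|\le 1$, yielding $\sup_X|\dbar\chi_j|_{\omega_j} \lesssim j^{-3/2}$ directly.
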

	
	\begin{proof} One may take
		$
		X_j:=X, \ \omega_j:=\omega+(1/j) \hat \omega, \ \ \chi_j:= \chi(\rho/ j^2),  
		$
		with $\chi:\R\to \R$ smooth such that $\chi=1$ on $(-\infty, 1]$, $\chi=0$ on $[3, \infty)$, $0\leq \chi\leq 1$, and $|\chi'| \leq 1$.
	\end{proof}
	
	Another of our motivations is rooted in trying to find the right set-up for utilizing the Demailly approximation in H\"ormander's $L^2$-estimate, in which case one needs to solve the $\dbar$-equation on the complement of an analytic subset of a complete K\"ahler manifold (see \cite[page 89-91]{Nguyen}). The point is then that while the complement might not be complete K\"ahler in general (in fact, it is a conjecture that there exists a counterexample), it is always qck by the above proposition and the following result  (see \cite[page 74, Lemma 4.1]{Nguyen} for the proof):
	
	\begin{proposition}\label{pr:qck1} Assume that $(X, \omega)$ is qck. Then $(X\setminus S, \omega)$ is qck for every analytic subset $S$ of $X$.
	\end{proposition}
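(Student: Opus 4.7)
The plan is to modify the given approximation family $(X_j,\omega_j,\chi_j)$ for $(X,\omega)$ by, for each $j$, cutting off $\chi_j$ away from a shrinking tubular neighborhood of $S$ while enlarging $\omega_j$ near $S$ by a Demailly-type correction, and then extracting a diagonal subsequence. The construction is local-to-global in character and relies on the relative compactness of each $X_j$ in $X$ together with Demailly's classical trick for producing complete K\"ahler metrics on complements of analytic subsets.

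\noindent\textbf{Step 1: a function blowing up on $S$.} Since $\overline{X_j}$ is compact in $X$ and $S$ is analytic, a finite cover of a neighborhood of $\overline{X_j}\cap S$ by charts in which $S=V(f^\alpha_1,\dots,f^\alpha_{p_\alpha})$, combined with a partition of unity, yields $v_j:X_j\setminus S\to[-\infty,-1]$ smooth off $S$, tending to $-\infty$ on $S$, locally comparable to $\log\sum_i|f^\alpha_i|^2$, and satisfying a quasi-plurisubharmonic estimate $i\partial\bar\partial v_j\ge-C_j\,\omega_j$. Setting $g_j:=-\log(-v_j)$ (Demailly's trick), $g_j$ is smooth on $X_j\setminus S$, tends to $-\infty$ on $S$, and a direct computation gives $i\partial\bar\partial g_j\ge i\partial g_j\wedge\bar\partial g_j-C_j\,\omega_j/(-v_j)$, so that $\tilde\omega_j:=(1+C_j)\,\omega_j+i\partial\bar\partial g_j$ is K\"ahler on $X_j\setminus S$, $\ge\omega_j\ge\omega$, and satisfies $|\bar\partial g_j|_{\tilde\omega_j}\le 1$.

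\noindent\textbf{Step 2: cutoff and diagonal extraction.} Pick $\rho_k\in C^\infty(\R,[0,1])$ with $\rho_k\equiv 1$ on $[-k,\infty)$, $\rho_k\equiv 0$ on $(-\infty,-2k]$, $|\rho_k'|\le 2/k$, and set $\eta_{j,k}:=\rho_k(g_j)$. Then $\mathrm{supp}\,\eta_{j,k}\Subset X_j\setminus S$, the set $\{g_j\ge-k\}$ on which $\eta_{j,k}\equiv 1$ exhausts $X_j\setminus S$ as $k\to\infty$, and $|\bar\partial\eta_{j,k}|_{\tilde\omega_j}\le 2/k$. Extract a diagonal sequence $(j_\ell,k_\ell)\to(\infty,\infty)$ and define $\chi'_\ell:=\chi_{j_\ell}\,\eta_{j_\ell,k_\ell}$, let $X'_\ell$ be an open neighborhood of $\mathrm{supp}\,\chi'_\ell$ in $X_{j_\ell}\setminus S$, and set $\omega'_\ell:=\tilde\omega_{j_\ell}|_{X'_\ell}$, where in the definition of $\tilde\omega_j$ the correction $i\partial\bar\partial g_j$ has been localized, via pre-multiplying $g_j$ by a cutoff supported in a neighborhood of $S$ that shrinks with $\ell$, so that on any fixed compact $K\Subset X\setminus S$ the identity $\tilde\omega_{j_\ell}|_K=\omega_{j_\ell}|_K$ holds for $\ell$ large.

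\noindent\textbf{Step 3: verification and main obstacle.} The product rule yields
\[
\sup_X|\bar\partial\chi'_\ell|_{\omega'_\ell}\le\sup_X|\bar\partial\chi_{j_\ell}|_{\omega_{j_\ell}}+2/k_\ell\;\longrightarrow\;0,
\]
and the remaining axioms $X'_\ell\subset X'_{\ell+1}$, $\bigcup_\ell X'_\ell=X\setminus S$, $\omega'_\ell\ge\omega$, $\chi'_\ell|_K\equiv 1$ eventually, and $\omega'_\ell\to\omega$ locally uniformly on $X\setminus S$, follow from the analogous properties of $(X_j,\omega_j,\chi_j)$ together with the localization of the Demailly correction in Step 2. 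The main obstacle is the coupled bookkeeping between the diagonal indices and the possibly growing constants $C_j$ together with the bound on $|\bar\partial g_j|_{\tilde\omega_j}$: the diagonal must be extracted so that $k_\ell$ outpaces $C_{j_\ell}$ and the ambient geometry of $\tilde\omega_{j_\ell}$ near $S$, and simultaneously the support of the localized Demailly correction must shrink to $S$ fast enough that $\omega'_\ell\to\omega$ on every fixed compact set of $X\setminus S$. The flexibility of the qck definition, which permits $\omega'_\ell$ to depend on $\ell$ rather than requiring a single global K\"ahler form, is precisely what makes this balancing possible; compare \cite[page 74, Lemma 4.1]{Nguyen}.
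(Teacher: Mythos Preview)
The paper does not actually prove this proposition; it only cites \cite[page~74, Lemma~4.1]{Nguyen}. Your proposal follows the expected route---Demailly's $-\log(-v)$ trick to build a K\"ahler correction blowing up along $S$ with $|\bar\partial g_j|_{\tilde\omega_j}\le 1$, combined with cutoffs $\rho_k(g_j)$ and a diagonal extraction---and this is essentially what the cited reference does.

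One point to tighten: in Step~2 you assert that after localizing the Demailly correction one has $\tilde\omega_{j_\ell}|_K=\omega_{j_\ell}|_K$ on compacts $K\Subset X\setminus S$, but your formula $\tilde\omega_j=(1+C_j)\,\omega_j+i\partial\bar\partial g_j$ carries the factor $(1+C_j)$ on all of $X_j$, so $\omega'_\ell\to\omega$ would fail unless $C_{j_\ell}\to 0$, which you have no reason to expect. Pre-multiplying $g_j$ by a cutoff $\psi$ supported near $S$ does not remove this factor: you still need a global multiple of $\omega_j$ to absorb the negative part of $i\partial\bar\partial(\psi g_j)$ in the transition zone, and a position-dependent multiple of $\omega_j$ is not closed. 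The clean fix is to scale the whole correction, setting
\[
\omega'_\ell:=\omega_{j_\ell}+\epsilon_\ell\bigl(C_{j_\ell}\,\omega_{j_\ell}+i\partial\bar\partial g_{j_\ell}\bigr)
\]
with $\epsilon_\ell\searrow 0$ chosen so that $\epsilon_\ell C_{j_\ell}\to 0$. Then $\omega'_\ell\ge\omega_{j_\ell}+\epsilon_\ell\,i\partial g_{j_\ell}\wedge\bar\partial g_{j_\ell}$ gives $|\bar\partial g_{j_\ell}|_{\omega'_\ell}\le\epsilon_\ell^{-1/2}$, hence $|\bar\partial\eta_{j_\ell,k_\ell}|_{\omega'_\ell}\le 2/(k_\ell\sqrt{\epsilon_\ell})$, and one simply picks $k_\ell$ large enough relative to $\epsilon_\ell^{-1/2}$. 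On a fixed compact $K\Subset X\setminus S$ (arranging the $g_j$ consistently, say $g_{j+1}|_{X_j}=g_j$, so that the derivatives of $g_{j_\ell}$ on $K$ stabilize), the convergence $\omega'_\ell\to\omega$ now follows. This is precisely the bookkeeping you flagged as the main obstacle; apart from this adjustment your sketch is sound.
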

	
	Proposition \ref{pr:qck1} allows us to prove the Jet selection theorem mentioned in the introduction (Theorem \ref{th:L21}), whose statement we are now ready to give. We use the following conventions: $K_{X}$ denotes the canonical line bundle of $X$, and a line bundle $(L,e^{-\phi})$ (over $X$) being pseudoeffective is to mean that the local weight $\phi$ is plurisubharmonic. Then the Jet selection theorem is the following:
	
	\begin{theorem}[The Jet selection theorem]\label{th:L21} Let $(L, e^{-\phi})$ be a pseudoeffective  line bundle over an $n$-dimensional qck manifold $X$, and introduce a family of norms $||\cdot||_{t}$ on the space $H^0(X,\mathcal{O}(K_X+L))$ indexed by $t\in \mathbb{R}$, by letting for $u\in H^0(X,\mathcal{O}(K_X+L))$, 
		\begin{equation}\label{eq:tNorm}
			||u||_t^2:=\int_X i^{n^2}  u\wedge \overline{  u}  \, e^{-\phi -\lambda \max\{G-t, 0\}},
		\end{equation}
		where $G\leq 0$ is a function on $X$ such that
		$\phi+\lambda G$ is plurisubharmonic on $X$ for some constant $\lambda>0$. 
		Fix $t<0$ and $0 \leq \alpha <\lambda$, and let $F\in H^0(X, \mathcal O(K_X+L))$  with
		$
		||F||_t^2 <\infty.
		$
		Then there exists $\tilde F\in H^0(X, \mathcal O(K_X+L))$ such that
		\begin{equation}\label{eq:h11-new}
			||\tilde F-F||^2_{\phi+\alpha G} \leq  \frac{9 \lambda(1-t)^{\lambda -\alpha+1} e^{-2\alpha t}}{\lambda-\alpha}  \,   ||F||_t^2 + 2 e^{-\alpha t} ||F||^2_\phi
		\end{equation}
		and
		\begin{equation}\label{eq:h12-new}
			||\tilde F||^2_\phi \leq \frac{4\lambda(1-t)^{\lambda -\alpha+1} e^{-\alpha t}}{\lambda-\alpha}\, ||F||_t^2,
		\end{equation}
		where $||u||_\psi^2:=\displaystyle \int_X i^{n^2} u\wedge \bar u \, e^{-\psi} $.
	\end{theorem}
	
	Our main observation in this paper is the following: One may use the Jet selection theorem (Theorem \ref{th:L21}) to replace the infinite dimensional version of Lemma 2.3 in \cite{Bern20} and generalize the theory in \cite{Bern20} to a large class of non-compact manifolds (see the main theorem, Theorem A, below). 
	
	\subsection{The main theorem, a generalized sharp strong openness theorem, and a sharp Ohsawa-Takegoshi extension theorem} 
	We next give the statement of the main theorem in this paper, and discuss applications of it to a generalization of the sharp strong openness theorem and a sharp Ohsawa-Takegoshi extension theorem. For a locally integrable function $\psi$, we will denote by $\mathcal I(\psi)$ the sheaf of germs of holomorphic functions $f$ such that $|f|^2e^{-\psi}$ is locally integrable. Our main theorem is the following: 
	
	\medskip
	\noindent
	\textbf{Theorem A (See page 88 of \cite{Nguyen}).} \emph{Let $(L, e^{-\phi})$ be a pseudoeffective line bundle over a qck manifold $(X, \omega)$, and let $G\leq 0$ be a function on $X$ such that $\phi+\lambda G$ is plurisubharmonic on $X$ for some constant $\lambda>0$.
		Fix $F\in H^0(X, \mathcal O(K_X+L))$ with $||F||^2_\phi<\infty$. Then for every $0\leq \alpha<\lambda$, 
		$$
		\mathbb{R}\ni t \mapsto e^{-\alpha t} \inf\left\lbrace ||\tilde F||_t^2 :   \tilde F-F \in H^0(X, \mathcal O(K_X+L)\otimes\mathcal I(\phi+\alpha G)) \right\rbrace   
		$$
		is decreasing, where the norms $||\cdot||_t$ are defined as in \eqref{eq:tNorm}, by} 
		
		\begin{align*}
			||u||_t^2:=\int_X i^{n^2}  u\wedge \overline{  u}  \, e^{-\phi -\lambda \max\{G-t, 0\}}.
		\end{align*}

	\medskip
	
	In the case that $X$ is compact, the above theorem follows from the main theorem in \cite{Bern20}. Our motivation is to use the Jet selection theorem (Theorem \ref{th:L21}) to prove the non-compact case. The proof of Theorem A is given in the next section, and the proof of the Jet selection theorem (Theorem \ref{th:L21}) is given (in a simple setting) in section 4.\\
	
We now discuss applications, starting with strong openness.  When $X$ is the unit ball in $\mathbb{C}^{n}$, Theorem A gives the following effective strong openness theorem: 
	
	\medskip
	\noindent
	\textbf{Theorem B.} \emph{Let $F$ be a holomorphic function on the unit ball $\mathbb B$ in $\mathbb C^n$, $\phi$ a plurisubharmonic function on $\mathbb B$, and $G\leq 0$ a function on $\mathbb B$ such that $\phi+ \lambda G$ is  plurisubharmonic
		for some positive constant $\lambda >0$.
		Suppose that 
		$$
		\int_\mathbb B |F|^2  \,e^{-\phi-\beta G}<\infty
		$$ for some constant $0<\beta<\lambda $, where integration is with respect to the Lebesgue measure. Then $|F|^2  \,e^{-\phi-\alpha G}$ is integrable near the origin for some $\alpha>\beta$. Moreover, letting
		$$
		\alpha_0:=\sup \{\alpha\geq 0: F_0 \in \mathcal I(\phi+\alpha G)_0\},
		$$
		then if $\alpha_0 <\lambda$, we have
		\begin{equation}\label{eq:so-sharp1-new}
			I_0:=\inf \left\lbrace\int_{\mathbb B} |\tilde F|^2e^{-\phi}:  (\tilde F-F)_0 \in \bigcup_{\alpha_0<\alpha\leq\lambda}  \mathcal I(\phi+\alpha G)_0 \right\rbrace >0,
		\end{equation}
		and
		\begin{equation}\label{eq:so-sharp2-new}
			\alpha_0 -\beta \geq  \frac{I_0}{\frac{\lambda}{(\lambda-\beta)\beta} \int_\mathbb B |F|^2  \,e^{-\phi-\beta G}- \frac1{\beta}\int_\mathbb B |F|^2 e^{-\phi} }.
	\end{equation}}
	
	\begin{proof} It suffices to prove \eqref{eq:so-sharp1-new} and \eqref{eq:so-sharp2-new}. 
		
		\smallskip
		
		\emph{Step 1: proof of \eqref{eq:so-sharp1-new}.} Nadel \cite{Nad} proved that each $\mathcal I(\phi+\alpha G)$ is a coherent ideal of the sheaf $\mathcal O_\mathbb B$ of germs of local holomorphic functions on $\mathbb B$. Since $G\leq 0$, we get that $\mathcal I(\phi+\alpha G)$ is decreasing with respect to $\alpha$. Thus the strong Noetherian property of coherent analytic sheaves (see \cite[page 90]{Demailly12}) gives
		\begin{equation}\label{eq:Noetherian0}
			\bigcup_{\alpha_0<\alpha\leq\lambda} \,\mathcal I(\phi+\alpha G)_0= \mathcal I(\phi+\bm \alpha G)_0 \ \text{for some $\alpha_0< \bm \alpha \leq\lambda$.}
		\end{equation}
		Now we have
		\begin{equation}\label{eq:so00}
			I_0=\inf \left\lbrace\int_{\mathbb B} |\tilde F|^2e^{-\phi}:  (\tilde F-F)_0 \in \mathcal I(\phi+\bm \alpha G)_0  \right\rbrace 
		\end{equation}
		for some $\alpha_0<\bm \alpha \leq \lambda$. If $I_0=0$ then we can find holomorphic functions $F_j$ on $\mathbb B$ with
		$$
		\int_\mathbb B |F_j|^2 e^{-\phi} \leq 2^{-j} , \ (F_j-F)_0 \in \mathcal I(\phi+\bm \alpha G)_0.
		$$
		Thus $F_j \to 0$ locally uniformly on $\mathbb B$ and Cartan's closedness of analytic ideals gives $F \in \mathcal I(\phi+\bm \alpha G)_0$, which contradicts that $\alpha_0<\bm \alpha$. Hence $I_0>0$.
		
		\smallskip
		
		\emph{Step 2: proof of \eqref{eq:so-sharp2-new}.}  Using the following calculus identity from \cite{Bern15b} (which also follows by a direct computation):
		\begin{equation}\label{eq:so3}
			\int_{-\infty}^0 e^{-\lambda\max\{s-t, 0\}} e^{-\beta t} dt= \frac{\lambda}{(\lambda-\beta)\beta}  e^{-\beta s}-\frac1\beta, \ \ \forall  \ \lambda>\beta, \ \ s\leq 0,
		\end{equation}
		we get
		\begin{equation}\label{eq:so4}
			\int_{-\infty}^0 ||F||_t^2 e^{-\beta t} dt = \frac{\lambda}{(\lambda-\beta)\beta} \int_\mathbb B |F|^2  \,e^{-\phi-\beta G}- \frac1{\beta}\int_\mathbb B |F|^2e^{-\phi}, 
		\end{equation}
		where
		\begin{equation}\label{eq:tnorm}
			||F||_t^2:=\int_\mathbb B |F|^2 e^{-\phi-\lambda\max\{G-t, 0\}}.
		\end{equation}
		Our main theorem (Theorem A) with $X:=\mathbb B$ and $\alpha_0<\alpha<\bm\alpha$ gives
		$$
		e^{-\alpha t}A(t) \geq A(0), \ \forall \ t<0,
		$$
		where
		$$
		A(t):=\inf\left\lbrace ||\tilde F||_t^2 :   \tilde F-F \in H^0(X, \mathcal O(K_X+L)\otimes\mathcal I(\phi+\alpha G)) \right\rbrace.
		$$
		Thus we have
		$$
		||F||_t^2 \geq A(t) \geq e^{\alpha t} A(0) \geq e^{\alpha t} I_0, \ \forall \ \alpha_0<\alpha<\bm\alpha.
		$$ 
		Letting $\alpha \to \alpha_0$, we get $||F||_t^2 \geq e^{\alpha_0 t} I_0$, which gives
		$$
		\int_{-\infty}^0 ||F||_t^2 e^{-\beta t} dt \geq \left(\int_{-\infty}^0 e^{\alpha_0 t} e^{-\beta t} dt \right) I_0 = \frac{I_0}{\alpha_0-\beta}.
		$$
		Hence we have
		$$
		\alpha_0-\beta \geq \frac{I_0}{\int_{-\infty}^0 ||F||_t^2 e^{-\beta t} dt}.
		$$
		Equation \eqref{eq:so-sharp2-new} now follows from \eqref{eq:so4}.
	\end{proof}

	\noindent
	\textbf{Remark.} \emph{In the case that $G$ is plurisubharmonic and $\phi=0$, letting $\lambda\to \infty$, we obtain
		$$
		\alpha_0 -\beta \geq \frac{\beta \,I_0}{\int_\mathbb B |F|^2  \,e^{-\beta G}- \int_\mathbb B |F|^2  } \geq \frac{\beta \,I_0}{\int_\mathbb B |F|^2  \,e^{-\beta G}- I_0  },
		$$
		which directly gives Guan's sharp estimate
		\begin{equation}\label{eq:so-sharp2}
			\frac{\alpha_0 - \beta}{\alpha_0} \geq  \frac{I_0}{ \int_\mathbb B |F|^2  \,e^{-\beta G} }.
		\end{equation}
		}
	The non-effective version is called the strong openness conjecture of Demailly (see \cite[Remark 15.2.2]{Dem00}). It was first proved in general by Guan-Zhou in \cite{GZ0}, while the $n=2$ case was first proved in \cite{FJ, JM} (see also \cite{Hiep, Lempert, GZ1} for other proofs). In the case that $F=1$, the strong openness conjecture is also called the openness conjecture, and was first proved by Berndtsson in \cite{Bern15b}. Recent developments around the strong openness theory include \cite{BG, BG1, G1, GM, GY}. In \cite{BG}, Bao-Guan improved \eqref{eq:so-sharp2} by replacing $I_0$ with a Bergman kernel term. The effective $L^p$ version of the strong openness property is proved in \cite{BG1,GY}. The proof in \cite{GY} depends on the concavity property of minimal $L^2$ integrals over the sub-level sets of plurisubharmonic functions in \cite{G1, GM}. In Theorem B, we look at a more general quasi-plurisubharmonic $G$. As far as we know, the concavity property associated with the sub-level sets of quasi-plurisubharmonic functions is still unknown. Our proof is also different from \cite{BG,GY} in the sense that we only use the Berndtsson convexity theorem for product domains and the classical H\"ormander $L^2$- estimate. That is, our arguments involve neither of the non-product version of Berndtsson's theorem, the Ohsawa-Takegoshi theorem, and the ODE-trick for twisted Bochner-Kodaira-Nakano formula.
	\medskip
	
	Next, we turn to $L^2$-holomorphic extension and our second application of Theorem A.  That is, the following sharp Ohsawa-Takegoshi estimate; the reader may refer to \cite{B96, B05, BCP, BL, Bl0, Bl1, Chan, CC, CDM, Chen, CWW, Dem18, DHP, GZ, Hosono0, K1, K2, KS, Le2, Ma, MV1, MV2, MV3, O2, O3, Ohsawa, Siu, wang21, ZZ} for many other interesting generalizations and applications of the Ohsawa-Takegoshi theorem \cite{OT}:
	
	\medskip
	\noindent
	\textbf{Theorem C (See page 102 of \cite{Nguyen}).} \emph{Let $(L, e^{-\phi})$ be a pseudoeffective line bundle over an $n$-dimensional qck manifold $(X, \omega)$, and let $G\leq 0$ be a function on $X$ such that $\phi+\lambda G$ is plurisubharmonic on $X$ for some constant $\lambda>1$. Fix $F\in H^0(X, \mathcal O(K_X+L))$ with 
		$$||F||^2_\phi =\displaystyle \int_{X}i^{n^2}F\wedge \bar{F}e^{-\phi}<\infty.$$ Then there exists $\tilde F $ satisfying $$
			\tilde F-F \in H^0(X, \mathcal O(K_X+L)\otimes\mathcal I(\phi+G))$$
		and
		\begin{equation}\label{eq:A2}
		||\tilde F||^2_\phi  \leq \frac{\lambda}{\lambda-1} \limsup_{s\to -\infty} e^{-s}\int_{G<s} i^{n^2} F\wedge \bar F \,e^{-\phi}.
	\end{equation}}
	
	\begin{proof} By Theorem A with $\alpha=1$, it suffices to show that 
		\begin{equation}\label{eq:calculus}
			\limsup_{t\to -\infty} e^{-t}||F||_t^2 \leq \frac{\lambda}{\lambda-1} \limsup_{s\to -\infty} e^{-s}\int_{G<s} i^{n^2} F\wedge \bar F \,e^{-\phi}.
		\end{equation}
		Put
		$
		d\mu:= i^{n^2} F \wedge \bar F \, e^{-\phi}.
		$
		Then 
		$$
		\mu(X)= ||F||^2_\phi, \ \ \ \mu(G<s)=\int_{G<s}  i^{n^2} F \wedge \bar F \, e^{-\phi}, 
		$$
		and we have
		$$
		||F||_t^2= \int_X e^{-\lambda \max\{G-t,0\}} \, d\mu= e^{\lambda t} \mu(X)- \int_{-\infty}^0  \mu(G<s) \,d \,e^{-\lambda \max\{s-t,0\}}.
		$$
		Since $\mu(X)$ is finite and $\lambda>1$, we have
		$$
		\lim_{t\to -\infty} e^{-t} e^{\lambda t} \mu(X) =0,
		$$ and
		hence
		\begin{align*}
			\limsup_{t\to -\infty} e^{-t}||F||_t^2 & = \limsup_{t\to -\infty} \left(- e^{-t} \int_{-\infty}^0  \mu(G<s) \,d \,e^{-\lambda \max\{s-t,0\}}\right)\\
			& =  \limsup_{t\to -\infty} \left(- e^{-t} \int_{t}^0  \mu(G<s) \,d \,e^{-\lambda(s-t)}\right) \\ 
			& =  \limsup_{t\to -\infty} \left(\lambda \int_{t}^0  \mu(G<s) e^{-s} e^{-(\lambda-1)(s-t)}\, ds\right) \\
			& =  \limsup_{t\to -\infty} \left(\lambda \int_{0}^{-t}  \mu(G<x+t) e^{-(x+t)} e^{-(\lambda-1)x}\, dx\right) \\
			&  \leq   \left(\lambda \int_{0}^{\infty} e^{-(\lambda-1)x}\, dx\right) \limsup_{s\to -\infty} \left(\mu(G<s) e^{-s}\right) \\
			&  =  \frac{\lambda}{\lambda-1} \limsup_{s\to -\infty} e^{-s}\int_{G<s} i^{n^2} F\wedge \bar F \,e^{-\phi},
		\end{align*}
		which gives the inequality.
	\end{proof}

	\noindent
	\textbf{Remark.} \emph{In the case that $X$ is a pseudoconvex domain in $\mathbb C^n$ and $G$ is plurisubharmonic, letting $\lambda \to \infty$, Theorem C reduces to the main theorem of Berndtsson and Lempert in \cite{BL}. See also \cite{Al, NW} for other applications of the Berndtsson-Lempert method.}

	\medskip
	
	Let us denote by $Z$ the non-integrable locus of $e^{-\phi-G}$. Then theorem C tells us that $\tilde F$ is a good $L^2$-extension of the restriction to $Z$, say $f:=F|_Z$, of $F$. Of course, $F$ is automatically an $L^2$-extension of $f$. Thus our theorem is only the "estimate" part of the sharp $L^2$-extension theorem. In the case that $X$ is Stein or projective, Ohsawa-Takegoshi \cite{OT} found that the "estimate" part also gives the ''existence'' part  of the $L^2$-extension. For a more general $X$, one may use a partition of unity, $1= \sum \chi_j$, to glue the local $L^2$-extensions $f_j$ and obtain a global smooth $L^2$-extension
	$$
	F:=\sum \chi_j f_j.
	$$
In general, $\dbar F$ is not zero, but its coefficients are always locally square integrable with respect to $e^{-\phi-G}$. We will write for this $\dbar F\in L^2_{loc,\phi+G}$ and use a similar meaning for the notation $L^2_{loc, \phi}$. In the case that $X$ is weakly pseudoconvex K\"ahler, the following sharp Ohsawa-Takegoshi theorem is proved in \cite{ZZ}:

	\begin{theorem}\label{th:ZZ1} Let $(L, e^{-\phi})$ be a holomorphic line bundle with quasi-plurisubharmonic metric (that is, $\phi$ is locally the sum of a smooth function and a plurisubharmonic function) over an $n$-dimensional weakly
	pseudoconvex K\"ahler manifold $X$, and let $G\leq 0$ be a function on $X$ such that
	$$
	\text{$\phi+G$ and $\phi+\lambda G$ are plurisubharmonic} 
	$$
	for some constant $\lambda >1$ on $X$. Then for every smooth section $F$ of $K_X+L$ over $X$ with $F\in L^2_{loc,\phi}$, $\dbar F\in L^2_{loc,\phi+G}$ and 
	\begin{equation}\label{eq:ZZ11}
		||F||_G^2:=\sup_{K \ \text{compact in} \ X}\limsup_{t\to -\infty} \int_{\{t<G<t+1\} \cap K} i^{n^2} F\wedge \overline{F} \,e^{-(\phi+G)}<\infty,
	\end{equation}
	there exists $\tilde F \in  H^0(X, \mathcal O(K_X+L) \otimes \mathcal I(\phi))$ such that $\tilde F - F \in L^2_{loc,\phi+G}$ and
	\begin{equation}\label{eq:ZZ12}
			||\tilde F||^2_\phi   \leq \frac{\lambda}{\lambda-1} ||F||^2_G.
	\end{equation}
\end{theorem}

\noindent
\textbf{Remark.} \emph{As above, one may think of $F$ as a smooth extension of some $f\in H^0(X, \mathcal O(K_X+L) \otimes \mathcal I(\phi)/\mathcal I(\phi+G))$. Then \eqref{eq:ZZ11} is known as the Ohsawa norm (see \cite{Ohsawa}) for $f$. It does not depend the choice of the smooth extension $F$. In fact,  if $F_1, F_2$ are two such smooth extensions (from different partitions of unity and different local $L^2$-extensions) then one may verify that
	$
	F_1-F_2 \in L^2_{loc,\phi+G}.
	$
	Hence we have
	\begin{align}\label{eq:ZZ13}
		\nonumber & \limsup_{t\to -\infty} \int\limits_{\{t<G<t+1\} \cap K} i^{n^2} F_1\wedge \overline{F_1} \, e^{-(\phi+G)} = \limsup_{t\to -\infty} \int\limits_{\{t<G<t+1\} \cap K} i^{n^2} F_2\wedge \overline{F_2} \, e^{-(\phi+G)} 
	\end{align}
	for every compact set $K$ in $X$, from which we get that
	$
	||F_1||^2_G = ||F_2||^2_G.
	$
	Thus Theorem \ref{th:ZZ1} is an equivalent form of  \cite[Theorem 1.2, $R(t)=e^{-t}$]{ZZ}. 
Assume that
\begin{equation}\label{eq:BLOT11}
	\limsup_{s\to -\infty} e^{-s}\int_{\{G<s\}\cap K} i^{n^2} F\wedge \overline{F} \,  e^{-\phi} = \liminf_{s\to -\infty} e^{-s}\int_{\{G<s\}\cap K} i^{n^2} F\wedge \overline{F} \,  e^{-\phi}
\end{equation}
for every compact set $K$ in $X$. Then we have
$$
||F||_G^2  = \sup_{K \ \text{compact in} \ X} \limsup_{s\to -\infty} e^{-s}\int_{\{G<s\}\cap K} i^{n^2} F\wedge \overline{F} \,  e^{-\phi},
$$
and hence 
\begin{equation}\label{eq:BLOT12}
	||F||_G^2   \leq \limsup_{s\to -\infty} e^{-s}\int_{G<s} i^{n^2} F\wedge \overline{F} \,  e^{-\phi}.
\end{equation}}

Thus if \eqref{eq:BLOT11} in the above remark holds and $X$ is weakly pseudoconvex, then Theorem \ref{th:ZZ1} implies our Theorem C (with weaker curvature assumptions). The proofs, however, of the two theorems even in this special case, are quite different. The qck case of Theorem \ref{th:ZZ1} is open as far as we know.\\

We end this section with the following remark:\\

\noindent \textbf{Remark.}
\emph{Using the language of the above remarks, the following sharp Ohsawa-Takegoshi extension theorem may be viewed as a corollary of our Theorem C; note the existence assumption:}

\begin{corollary}
Under the same assumptions as in Theorem C, for any smooth section $F$ of $K_{X}+L$ with $F\in L^{2}_{loc, \phi}, \dbar F\in L^2_{loc, \phi+G}$, if there exists an extension $F_1\in H^0(X,\mathcal{O}(K_X+L)\otimes \mathcal{I}(\phi))$ in the sense that $F_1-F\in L^2_{loc, \phi+G}$, with $||F_1||_{\phi}<\infty$, then we may choose an extension $F_2$ such that 
\begin{align*}
||F_2||_{\phi}^2\leq \frac{\lambda}{\lambda-1}\limsup_{s\to -\infty}e^{-s}\int_{G<s}i^{n^2}F_1\wedge \bar{F}_1 e^{-\phi}.
\end{align*}
\end{corollary}
\begin{proof}
Replace the $F$ that appears in Theorem C by $F_1$. The assertion now follows by writing $F_2-F=(F_2-F_1)-(F-F_1)$ and noting that both of the terms in the parenthesis on the right-hand side lie in $L^2_{loc,\phi+G}$ by assumption. Hence, so does the left-hand side.
\end{proof}

 \section{Hilbert bundle approach to complex Brunn-Minkowski theory and proof of the main theorem}
In this section, we prove the main theorem (Theorem A), assuming the statement of the Jet selection theorem (\ref{th:L21}) (the proof of which comes in section 4). Aside from the latter, the main ingredient in the proof is a version of Berndtsson's convexity theorem in the general setting of qck manifolds (see Theorem \ref{th:main1-general} below). That is, is the convexity of (the logarithm of) certain norms of continuous linear functionals on certain ''Bergman type of spaces'' (to be elaborated below). These norms depend on some (real) parameter $t$, and the idea is to compute the second order derivative of the norms with respect to $t$ directly (and then estimate using $L^2$-methods). For this, we (first) need to know that the norms depend smoothly on $t$. In the classical approach to the complex Brunn-Minkowski theory, in the setting of pseudoconvex domains, this smoothness property, which follows from regularity of the Bergman projection, is established by means of the Hamilton-Kohn regularity theory for the $\dbar$-Neumann operator on smoothly bounded strictly pseudoconvex domains. To deal with the smoothness of the norms in the more general setting, we introduce (what we call) a Hilbert bundle approach to the complex Brunn-Minkowski theory, which allows us to deal with the question of regularity using much more elementary means. In fact, the key ingredient becomes the implicit mapping theorem (see Theorem \ref{th:Bo-1} and its proof below). The argument involving the implicit mapping theorem was taught to us by Berndtsson. To introduce the smoothness result, let us first start by properly defining the corresponding Bergman type of spaces, and norms, that we are going to work with.
	
	\subsection{Positively curved Hilbert bundles}
	
	Let $H$ be a complex Hilbert space. We shall call the product
	$$
	\mathcal H:= H\times \mathbb C
	$$
	a (trivial) Hilbert bundle. By a sub-bundle of $\mathcal{H}$ we will simply mean a Hilbert bundle $\mathcal{H}_0=H_0\times \mathbb{C}$ where $H_0$ is a closed subspace of $H$. Our Bergman type of spaces will be such sub-bundles (of an ambient Hilbert bundle whose underlying Hilbert space is a particular $L^2$-space), and the norms that we shall be working with will be induced from what we call smooth hermitian metrics on Hilbert bundles as $\mathcal{H}$ above; see Definitions \ref{de:smooth hermitian metric} and \ref{de:HH0} below.\\
	
	Now let $\mathcal{H}$ be as above.  We follow \cite[section 2]{Le3} and use the following definition for the notion of holomorphic sections of $\mathcal{H}$:
	
	\begin{definition}\label{de:h-section} Let $U$ be a domain in $\mathbb C$. A smooth mapping  $U \to H$ is said to be holomorphic if its differential is $\mathbb C$-linear. 
	\end{definition}
We then identify (local) holomorphic sections of $\mathcal H$ (over $U$) with holomorphic mappings $U\to H$ and denote by $H^0(U, \mathcal O(\mathcal H))$ the space of holomorphic sections of $\mathcal H$ over $U$; (local) smooth sections of $\mathcal{H}$ on $U$ are of course identified with smooth maps $U\to H$.	We may similarly define holomorphic sections for the dual bundle
		$$
		\mathcal H^*:= H^*\times \mathbb C,
		$$
		where $H^*$ denotes the dual of $H$. Then $\sigma\in H^0(U, \mathcal O(\mathcal H^*))$ if and only if the mapping $U\times H \to \mathbb C$ defined by
		$$
		(\tau, u) \to \sigma(\tau)(u)
		$$
		is holomorphic and $\mathbb C$-linear in the second argument. In particular, each $f\in H^*$ defines a (constant) holomorphic section with the associated mapping $U\times H \to \mathbb C$ given by
		$$
		(\tau, u) \to f(u).
		$$

\begin{definition}\label{de:smooth hermitian metric}	
Let $\mathcal{H}=H\times \mathbb{C}$ be a Hilbert bundle as above. A smooth hermitian metric on $\mathcal{H}$ is a smooth assignment $h:=\{h_{\tau}\}_{\tau\in \mathbb{C}}$ to each $\tau\in \mathbb{C}$ a complete inner product $h_{\tau}$ on $H$, where the smoothness means that for every pair of (local) smooth sections $u$ and $v$ of $\mathcal{H}$, the complex-valued function
\begin{align*}
\tau\mapsto h_{\tau}(u(\tau), v(\tau)):=h(u,v)(\tau)
\end{align*}is smooth.
\end{definition}
Let $L(H)$ denote the space of bounded self-adjoint linear operators $T$ on $H$ such that $T$ is bijective with positive spectrum, and let $h$ be a smooth mapping $\mathbb{C}\to L(H)$. We identify $h$ with the smooth (hermitian) metric on $\mathcal H$ (slightly abusing the notation) given by
		\begin{align*}
		h(u,v)(\tau)&:=(h(\tau)u,v),\quad u,v\in H, \tau\in \mathbb{C},
		\end{align*}where $(\cdot,\cdot)$ denotes the inner product on $H$ (and an element $x\in H$ is viewed as a constant section $\tau \mapsto x$ of $\mathcal{H}$).\\

	\noindent
	\textbf{Remark.} \emph{Given a smooth metric $h:\mathbb{C}\to L(H)$ on $\mathcal{H}$ as above, this defines a family of Hilbert norms $||\cdot||_{h,\tau}:=||\cdot||_{\tau}$ on $H$ where
		$$
		||u||^2_\tau:=h(u,u)(\tau)=(h(\tau) u, u),\quad u\in H.
		$$ By the open mapping theorem, each new norm $||\cdot||_\tau$ is equivalent to the original norm on $H$.}\\
		
		 If $h$ is a smooth hermitian metric on $\mathcal{H}$, we denote the corresponding dual metric on the dual bundle $\mathcal{H}^*$ by $h^*$. The corresponding induced (family of) norm(s) $||\cdot||_{h^*,\tau}:=||\cdot||_{\tau}$ is then given by 
		 \begin{align*}
		 ||f||_{\tau}&=\sup\left\{\abs{f(u)}: ||u||_{\tau}=1\right\},\quad f\in H^*.
		 \end{align*}
		
		We now have the following definition for the the positivity of (the curvature of) smooth metrics $h:\mathbb{C}\to L(H)$ on $\mathcal{H}$:

	\begin{definition}\label{de:h-curvature} Let $h:\mathbb{C}\to L(H)$ be a smooth metric on $\mathcal H$. Then we say that $h$ is positively curved if the dual metric $h^*$ on $\mathcal H^*$ is negatively curved in the following sense: For every open set $U\subset \mathbb C$ and every $\sigma\in H^0(U, \mathcal O(\mathcal H^*)) $, the norm function
		$$
		\tau \mapsto ||\sigma(\tau)||^2_\tau
		$$
		is subharmonic on $U$, where $||\cdot||_{\tau}=||\cdot||_{h^*,\tau}$.
	\end{definition}
	As explained by Berndtsson in page 2362 in  \cite{Bern20}, the condition in the previous definition is equivalent to that
		$\tau\mapsto \log ||\sigma(\tau)||_\tau$ is subharmonic (it can be identically equal to $-\infty$), and we shall usually use the latter directly for the positivity of smooth metrics on $\mathcal{H}$.

	\subsection{Complex Brunn-Minkowski theory}
	
	The  complex Brunn-Minkowski theory established by Berndtsson (see \cite{Bern06, Bern09}, see also \cite{Bern18} for a short survey) is around the positive curvature properties of certain vector bundles associated to holomorphic fibrations. In terms of our Hilbert bundle language, this corresponds to the positive curvature properties of the sub-bundle $\mathcal H_0$ of a fixed ambient Hilbert bundle $\mathcal{H}_1$. The underlying Hilbert spaces of these bundles, respectively $H_0$ and $H_1$, are defined as follows:
	
	\begin{definition}\label{de:HH0} Let $(L, e^{-\phi})$ be a pseudoeffective line bundle over a complex manifold $X$.  We define $H_1$ to be the space of measurable sections $u$ of $K_X+L$ with finite $L^2$-norm (squared)
		$$
		||u||_{\phi}^2:=\int_X i^{n^2} u\wedge \bar u \, e^{-\phi} <\infty,
		$$
		and $H_0$ to be the subspace
		$
		H_0:=H^0(X, \mathcal O(K_X+L))\cap H_1
		$, called
		the Bergman space. 
	\end{definition}
	
	\noindent \textbf{Remark.} \emph{Sometimes we will write $||u||_{\phi}$ simply as $||u||$.}\\

	To define a smooth Hermitian metric on our Hilbert bundles $\mathcal{H}_1$ and $\mathcal{H}_0$, we fix a non-positive measurable function
	$G: X \to [-\infty, 0]$, and a smooth function $\chi:\mathbb{R}\to [0,\infty)$ that vanishes identically on $(-\infty, 0]$. The following proposition is proved in \cite[page 83-85]{Nguyen}.
	
	\begin{proposition}\label{pr:smooth-N} The mapping $h: \mathbb C\to L(H_1)$ defined by
		$$
		h(\tau)(u):= e^{-\chi(G-{\rm Re}\,\tau)} u, \ \ \tau\in \mathbb C, \ \ u\in H_1,
		$$
		is smooth.
	\end{proposition}
	
	We take the induced smooth Hermitian metric from the map $h$ in the previous proposition as a smooth metric on our Hilbert bundle $\mathcal{H}_1=H_1\times \mathbb{C}$. The associated family of new norms $||\cdot||_\tau$, on $H_1$ and $H_0$ is given by
	\begin{equation}\label{eq:tnorm-new}
		||u||^2_\tau:= \int_X i^{n^2} u\wedge \bar u \, e^{-\phi-\chi(G-{\rm Re}\,\tau)}.
	\end{equation} 
	Thus our Hilbert bundles $\mathcal H_1$ and $\mathcal H_0$ are trivial families of vector spaces with non-trivial families of equivalent Hilbert norms. We are interested in the positivity of the curvature of the sub-bundle $\mathcal{H}_0$. Indeed, by Definition \ref{de:h-curvature}, this amounts to the convexity of the map $t \mapsto \log ||f||_{t}$ for every holomorphic section $f$ of the dual of $\mathcal{H}_{0}$. The following definition is of great relevance:
	
	\begin{definition}\label{de:bp} Let $H_1$ and $H_0$ be as in Definition \ref{de:HH0}. We call the canonical orthogonal projection
		$$
		P^\tau: H_1 \to H_0
		$$
		with respect to the above $||\cdot||_\tau$-norm a Bergman projection.
	\end{definition}
	
	The first step in establishing the positivity of the curvature of the sub-bundle $\mathcal{H}_0$, is to establish that $\tau\mapsto ||f(\tau)||^2_{\tau}$ is smooth for every smooth section $f$ of the dual of $\mathcal{H}_0$. To do this, we may prove that $P^\tau$ depends smoothly on $\tau$. The Bergman projection $P^{\tau}$ naturally appears as we are now dealing with continuous linear functionals on the \emph{sub}space $H_0$. This has to do with the fact that the ((1,0)-part of) the Chern connection of $h$ restricted to $\mathcal{H}_0$ is related to that of $h$ on $\mathcal{H}_1$ via the projection map $P^{\tau}$, and that derivatives of the form $\frac{d }{d \tau}||u||^2_{\tau}, u\in H_{i}, i\in \{0,1\}$, can be computed from the ((1,0)-part of) Chern connection of $h$ on $\mathcal{H}_{i}$. We omit here the details. They can be read in \cite[chapter 1]{Nguyen}. For smoothly bounded strongly pseudoconvex domains, the projection map $P^{\tau}$ amounts to the usual (weighted) Bergman projection and its smoothness follows as mentioned earlier from Kohn-Hamilton's regularity theory (see Lemma 2.1 in \cite{Bern06} and Theorem 1.1 in \cite{Bern09}). In our current setting, we have the following regularity theorem mentioned above using an implicit mapping theorem argument (see also \cite{Nguyen} for generalizations):
	
	\begin{theorem}\label{th:Bo-1} With the notation above,
		\begin{eqnarray*}
			P: \mathbb C \times H_1 & \to & H_0 \\
			(\tau,u) & \mapsto & P^\tau u,
		\end{eqnarray*}
		is a smooth map from $\mathbb C \times (H_1, ||\cdot||)$ to $(H_0, ||\cdot||)$.
	\end{theorem}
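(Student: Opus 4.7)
The plan is to express the Bergman projection as $P^t u = A_t^{-1} B_t u$ for bounded operators $A_t : H_0 \to H_0$ and $B_t : H \to H_0$ that depend smoothly on $t$ and with $A_t$ invertible; the smoothness of $P$ then reduces to the smoothness of operator inversion on the open set of invertible operators. This completely avoids the Kohn-Hamilton regularity that was used in Berndtsson's original argument in \cite{Bern06, Bern09}.

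Since smoothness is a local property in $t$, I fix an arbitrary bounded interval $I \subset \mathbb R$. Because $G \leq 0$ and $\chi$ together with every derivative $\chi^{(k)}$ vanishes identically on $(-\infty, 0]$, the values $\chi^{(k)}(G - t)$ are uniformly bounded on $I \times X$ by constants $M_{I,k}$. In particular
\begin{equation*}
e^{-M_{I,0}}\, ||\cdot||^2 \leq ||\cdot||_t^2 \leq ||\cdot||^2 \qquad \text{on } H, \ t \in I,
\end{equation*}
so $H_0$ is closed in $(H, ||\cdot||_t)$ and $P^t$ is well-defined; these same bounds will justify differentiation under the integral sign below.

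By Riesz representation applied to $(H_0, \langle \cdot, \cdot \rangle)$, define $A_t \in L(H_0, H_0)$ and $B_t \in L(H, H_0)$ by
\begin{equation*}
\langle A_t v, h \rangle = \langle v, h \rangle_t, \qquad \langle B_t u, h \rangle = \langle u, h \rangle_t, \qquad \forall h \in H_0.
\end{equation*}
The orthogonality condition characterizing $P^t$, namely $\langle P^t u - u, h \rangle_t = 0$ for every $h \in H_0$, reads $A_t(P^t u) = B_t u$. From the norm equivalence above, $A_t$ is self-adjoint on $(H_0, \langle\cdot,\cdot\rangle)$ with spectrum in $[e^{-M_{I,0}}, 1]$, so it is boundedly invertible uniformly in $t \in I$ and $P^t u = A_t^{-1} B_t u$.

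Smoothness of $t \mapsto \langle v, h \rangle_t$ with derivatives controlled by a constant times $||v|| \cdot ||h||$ follows by differentiating under the integral sign: the $k$-th derivative is
\begin{equation*}
\int_X i^{n^2} v \wedge \bar h \, Q_k(\chi'(G-t), \dots, \chi^{(k)}(G-t)) \, e^{-\phi - \chi(G-t)}
\end{equation*}
for a universal polynomial $Q_k$, with integrand dominated uniformly in $t \in I$ by the constants $M_{I,j}$ (allowing dominated convergence). Hence $t \mapsto A_t$ and $t \mapsto B_t$ are $C^\infty$ in operator norm, and since operator inversion is smooth on the open set of invertible operators, so is $(t, u) \mapsto A_t^{-1} B_t u$. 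The delicate ingredient is really the uniform norm equivalence: the combination of $G \leq 0$ with $\chi \equiv 0$ on $(-\infty, 0]$ is what turns pointwise boundedness of $\chi(G-t)$ into a bound uniform on all of $X$ and all $t \in I$, which is precisely what makes the whole operator-theoretic scheme independent of the geometry of $X$ and lets us sidestep the Kohn-Hamilton regularity theorem.
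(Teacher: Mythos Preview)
Your proof is correct and rests on the same underlying observation as the paper's: the multiplier $e^{-\chi(G-t)}$, together with all its $t$-derivatives, is \emph{uniformly} bounded on $X$ for $t$ in any bounded interval (this is exactly where $G\le 0$ and $\chi|_{(-\infty,0]}\equiv 0$ are used), so composing multiplication by it with a \emph{fixed} Bergman projection yields operators that depend smoothly on $t$. The paper packages the argument via the implicit function theorem, introducing $F(t,u,v)=P^{t_0}\bigl(e^{\chi(G-t_0)-\chi(G-t)}(v-u)\bigr)$ and noting that $\partial_v F|_{t=t_0}=\mathrm{id}_{H_0}$. Taking $t_0=0$ (where $\chi(G)=0$), the equation $F(t,u,v)=0$ is literally your $A_t v=B_t u$, with $P^{0}$ playing the role of the Riesz map you invoke. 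The difference is only in how the equation is solved: you observe that $A_t$ has spectrum in $[e^{-M_{I,0}},1]$ and invert it explicitly, obtaining the formula $P^t=A_t^{-1}B_t$ and appealing to smoothness of operator inversion; the paper lets the implicit function theorem absorb that step. Your route is a touch more explicit (one gets a concrete expression for $P^t$ and its derivatives), while the paper's is slightly shorter; both bypass Kohn--Hamilton in exactly the same way.
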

	
	\begin{proof}  We give a short proof. Fix $(\tau_0, u_0)\in \mathbb R\times H$, and consider the smooth map
		\begin{eqnarray*}
			F: \mathbb C \times H_1\times H_0 & \to & H_0 \\
			(\tau,u,v) & \mapsto & P^{t_0} (  e^{\chi(G-{\rm Re}\,\tau_0)-\chi(G-{\rm Re}\,\tau)} (v-u)  ) 
		\end{eqnarray*}
		(smoothness of $F$ follows from Proposition \ref{pr:smooth-N}). One may show that $F(\tau,u,v)=0$ if and only if  $v=P^\tau u=P(\tau,u)$. Hence, since $F_{H_0}(\tau_0, u_0, v) \equiv id_{H_0}$ (that is, the partial derivative of $F$ in the $H_0$-direction, at $(t_0, u_0, v)$, is equal to the identity map on $H_0$), the smoothness of $P$ follows directly from the implicit mapping theorem.
	\end{proof}
	
	Asssume that $X$ is smoothly bounded strongly pseudoconvex. In \cite[Theorem 1.1]{Bern06}, it is proved by Berndtsson, using H\"ormander's theorem on $L^2$-estimates of solutions of the $\dbar$-operator, that if the total weight is (strictly) plurisubharmonic then $\mathcal H_0$ is positively curved. In our case, the total weight function is
	$$
	\Phi: (z, \tau) \mapsto \phi(z)+\chi(G(z)-{\rm Re}\,\tau).
	$$
	Note that if $\chi''\geq 0$ then
	$$
	i\partial\dbar\Phi \geq  i\partial\dbar \phi+\chi'\, i\partial\dbar G.
	$$
	Thus if $\phi$ and $\phi+\lambda G$ are plurisubharmonic for some constant $\lambda>0$, and $\chi$ is convex with $0\leq \chi' \leq \lambda$, then $\Phi$ is plurisubharmonic. With these assumptions, using Theorem \ref{th:Bo-1} (and $L^2$-methods on qck manifolds; see \cite[chapter 2]{Nguyen}), the first named author gives in \cite{Nguyen} the following version of  \cite[Theorem 1.1]{Bern06} for all qck $X$.
	
	\begin{theorem}[See Theorem 1.6 in page 59 of \cite{Nguyen} ]\label{th:Tai2} Let $(L, e^{-\phi})$ be a pseudoeffective line bundle over an $n$-dimensional qck manifold $X$. Let $G\leq 0$ be a function on $X$ such that
		$\phi+\lambda G$ is plurisubharmonic for some constant $\lambda>0$. Let $\chi \geq 0$ be a smooth convex function on $\mathbb R$ that vanishes identically on $(-\infty, 0)$. Assume that $0\leq \chi' \leq \lambda$. Then the metric defined in \eqref{eq:tnorm-new} on $\mathcal H_0$, by
		
\begin{align*}
||u||^2_\tau:= \int_X i^{n^2} u\wedge \bar u \, e^{-\phi-\chi(G-{\rm Re}\,\tau)},
\end{align*}		
is positively curved in the sense of Definition \ref{de:h-curvature}.
	\end{theorem}

	Any fixed bounded $\mathbb{C}$-linear functional $f: H_0 \to \mathbb C$ defines a constant, thus holomorphic, section of $\mathcal H_0^*$. Hence, the above theorem implies that
		$$
		\tau \mapsto \log ||f||_\tau,
		$$
		is subharmonic on $\mathbb C$, where
		$$
		||f||_\tau:=\sup\{ |f(u)|:  u \in H_0, \ ||u||_\tau=1\}.
		$$
		Since $||f||_\tau$ depends only on the real part of $\tau$, we get that $\log  ||f||_t$ is convex in $t\in\mathbb R$. Letting $\chi(s)$ converge to $\lambda\max\{s,0\}$, we finally obtain the following Berndtsson's convexity theorem in our current setting:
	
	\begin{theorem}\label{th:main1-general} Let $(L, e^{-\phi})$ be a pseudoeffective line bundle over an $n$-dimensional qck manifold $X$. Let $G\leq 0$ be a function on $X$ such that
		$\phi+\lambda G$ is plurisubharmonic for some constant $\lambda>0$. Let $f$ be a bounded $\mathbb{C}$-linear functional on 
		$$
		H_0:= \left\lbrace u\in H^0(X, \mathcal O(K_X+L)) :  \int_X i^{n^2} u\wedge \bar u \, e^{-\phi}<\infty \right\rbrace.
		$$
		Put
		$$
		||f||_t:=\sup \left\lbrace |f(u)|: u\in H_0, \ \int_X i^{n^2} u\wedge \bar u \, e^{-\phi -\lambda \max\{G-t, 0\}}=1 \right\rbrace, \ \ t \in\mathbb R.
		$$
		Then $t\mapsto \log ||f||_t$ is convex on $\mathbb R$. 
	\end{theorem}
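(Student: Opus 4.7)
The plan is to apply the second-derivative machinery of Lemma \ref{le:new-23} for smooth convex regularizations of $\lambda\max\{\cdot,0\}$, extract a pointwise Bochner-type bound from the plurisubharmonicity of the total weight, invoke the qck H\"ormander estimate, and then pass to the limit.

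First I would choose a sequence of smooth convex $\chi_k:\mathbb{R}\to[0,\infty)$ vanishing on $(-\infty,0]$ with $0\le\chi_k'\le\lambda$ and $\chi_k(s)\nearrow\lambda\max\{s,0\}$ pointwise; set $\phi^t_k:=\phi+\chi_k(G-t)$ and work with the Hilbert norms
\begin{equation*}
\|u\|_{t,k}^2:=\int_X i^{n^2}u\wedge\bar u\,e^{-\phi^t_k}
\end{equation*}
on $H$. By Lemma \ref{le:new-pshtotal} the total weight $\Phi_k(z,\tau):=\phi(z)+\chi_k(G(z)-\operatorname{Re}\tau)$ is psh on $X\times\mathbb{C}$; computing $i\partial\bar\partial\Phi_k$ as in the paragraph preceding Lemma \ref{le:new-pshtotal} and restricting to the $(z,\tau)$-Hessian yields the fibrewise pointwise inequality
\begin{equation*}
\tfrac{d^2\phi^t_k}{dt^2}\cdot i\partial\bar\partial\phi^t_k\;\ge\;i\partial\bigl(\partial_t\phi^t_k\bigr)\wedge\bar\partial\bigl(\partial_t\phi^t_k\bigr).
\end{equation*}

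Next I would feed these regularized weights into Lemma \ref{le:new-23}. Its conclusion identifies $\partial_t^\phi u_{f,k}^t$ as the $L^2$-minimal solution of $\bar\partial(\cdot)=-\bar\partial(\partial_t\phi^t_k)\wedge u_{f,k}^t$ with respect to the $\phi^t_k$-weighted inner product. Since $(X,\omega)$ is qck, the H\"ormander $L^2$-estimate on qck manifolds (the extension \cite[Theorem 1.7]{Nguyen} of the pseudoconvex case) combined with the previous pointwise bound gives
\begin{equation*}
\|\partial_t^\phi u_{f,k}^t\|_{t,k}^2\;\le\;\int_X i^{n^2}\,\tfrac{d^2\phi^t_k}{dt^2}\,u_{f,k}^t\wedge\overline{u_{f,k}^t}\,e^{-\phi^t_k}.
\end{equation*}
Substituting into formula \eqref{eq:4} collapses the right-hand side to $\|\partial_t u_{f,k}^t\|_{t,k}^2$; combining this with the first-derivative formula \eqref{eq:3} and the Cauchy--Schwarz bound $|(\partial_t u_{f,k}^t,u_{f,k}^t)_{t,k}|^2\le\|\partial_t u_{f,k}^t\|_{t,k}^2\cdot\|f\|_{t,k}^2$ yields $\tfrac{d^2}{dt^2}\log\|f\|_{t,k}^2\ge 0$, so each $t\mapsto\log\|f\|_{t,k}^2$ is convex.

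Finally I would pass to the limit $k\to\infty$. Since $\chi_k\nearrow\lambda\max\{\cdot,0\}$, monotone convergence gives $\|u\|_{t,k}^2\searrow\|u\|_t^2$ on $H$ for each $t$, and a standard Fatou/duality argument promotes this to $\|f\|_{t,k}^2\nearrow\|f\|_t^2$. Pointwise increasing limits of convex functions are convex (allowing the value $-\infty$ in the degenerate case), so the desired convexity of $\log\|f\|_t^2$ follows. The main obstacle I expect is the H\"ormander inequality on a qck manifold: the standard pseudoconvex estimate does not apply directly, and one has to exhaust $X$ via the approximation family $(X_j,\omega_j,\chi_j)$ of Definition \ref{de:qck}, run H\"ormander with cut-offs, control the error $\bar\partial\chi_j\wedge(\cdot)$ using $\sup_X|\bar\partial\chi_j|_{\omega_j}\to 0$, and extract a weak $L^2$ limit; once this step (supplied by Proposition \ref{pr:qck1} together with \cite[Theorem 1.7]{Nguyen}) is in hand, the rest is direct computation.
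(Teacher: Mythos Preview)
Your proposal is correct and follows essentially the same route as the paper: regularize $\lambda\max\{\cdot,0\}$ by smooth convex $\chi$ with $0\le\chi'\le\lambda$ (Lemma \ref{le:new-pshtotal}), apply the second-derivative identity of Lemma \ref{le:new-23}, use the qck H\"ormander estimate \cite[Theorem 1.7]{Nguyen} to obtain \eqref{eq:neq-h0} and hence \eqref{eq:5}, conclude convexity via \eqref{eq:3} and Cauchy--Schwarz, and finally let $\chi\to\lambda\max\{\cdot,0\}$. Your identification of the qck H\"ormander step as the only nontrivial ingredient, and of how it is supplied, matches the paper exactly.
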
With this, we are now ready for the proof of the main theorem (Theorem A).
	
	\subsection{Proof of our main theorem}
Assuming the (statement of the) Jet selection theorem (Theorem \ref{th:L21}), we shall follow \cite{Bern20} and prove our main theorem (Theorem A in section 2.2).  Following \cite{Bern20}. We look at quotient spaces of the form
	$$
	Q:=H_0/S.
	$$
	Since $\mathcal{H}_0$ is positively curved (with respect to some smooth hermitian metric $h$), the corresponding Hilbert bundle $\mathcal Q$ is positively curved (this follows from a quotient bundle curvature formula, see \cite[page 43-44]{Nguyen}). Thus Theorem \ref{th:main1-general} applies and gives the following:
	
	\begin{theorem}\label{th:main1-general-Q} Let $(L, e^{-\phi})$ be a pseudoeffective line bundle over an $n$-dimensional qck manifold $X$. Let $G\leq 0$ be a function on $X$ such that
		$\phi+\lambda G$ is plurisubharmonic for some constant $\lambda>0$. Let $f$ be a bounded $\mathbb{C}$-linear functional on the quotient Hilbert space $Q:=H_0/S$. 
		Then $t\mapsto \log ||f||_t$ is convex on $\mathbb R$, where
		$$
		||f||_t:=\sup \left\lbrace |f([u])|: [u]\in H_0/S, \ ||[u]||_t=1 \right\rbrace, \ \ t \in\mathbb R,
		$$ 
		and the quotient norm $||[u]||_t$ on $H_0/S$ (of $[u]\in H_0/S$) is defined by 
		\begin{equation}\label{eq:q-norm}
			||[u]||^2_t:=\inf\left\lbrace \int_X i^{n^2} v\wedge \bar v \, e^{-\phi-\lambda \max\{G-t, 0\} } :   v-u \in S \right\rbrace.
		\end{equation}
	\end{theorem}

	\noindent
	\textbf{Remark.} \emph{In order to prove our main theorem (Theorem A), we apply the above theorem to $S=S_\alpha$, $0\leq \alpha <\lambda$, where
		$$
		S_\alpha:=H_0\cap H^0(X, \mathcal O(K_X+L)\otimes\mathcal I(\phi+\alpha G)).
		$$
		Then one may rephrase our main theorem (Theorem A) as the following:}
	
	\begin{theorem}[Equivalent to the main theorem]\label{th:main-equiv} Let $(L, e^{-\phi})$ be a pseudoeffective line bundle over an $n$-dimensional qck manifold $X$. Let $G\leq 0$ be a function on $X$ such that
		$\phi+\lambda G$ is plurisubharmonic for some constant $\lambda>0$.  Then for every $0\leq \alpha <\lambda$,
		$e^{-\alpha t} ||[u]||_t^2$ is decreasing in $t\in \mathbb R$, where $||[u]||_t$ denotes the quotient norm on $ H_0/S_\alpha$.	
	\end{theorem}
	
	\begin{proof} Fix $t<0$. For every $u\in H_0$, we may take $F\in H_0$ with $[F]=[u]$ (i.e. $F-u\in S_\alpha$) and 
		$
		||F||_t =||[u]||_t.
		$
		Then the Jet selection theorem (Theorem \ref{th:L21}) gives $\tilde F\in H_0$ with  $[\tilde F]=[F]$ and
		\begin{equation}\label{eq:Bo20}
			||[u]||_0^2 \leq  ||\tilde F||_\phi^2 \leq \frac{4\lambda(1-t)^{\lambda -\alpha+1} e^{-\alpha t}}{\lambda-\alpha}\, ||F||_t^2 \leq  C(\varepsilon, \lambda, \alpha) e^{-(\alpha+\varepsilon)t}||[u]||_t^2
		\end{equation}
		for every $\varepsilon>0$. By \eqref{eq:Bo20}, we have
		$$
		||[u]||_0^2 \leq  C(\varepsilon, \lambda, \alpha) e^{-(\alpha+\varepsilon)t}||[u]||_t^2, \ \ \forall \ [u]\in H_0/S_\alpha, \  \ t<0,
		$$
		which is equivalent to the following dual norm estimate:
		$$
		||f||_t^2 \leq C(\varepsilon, \lambda, \alpha) e^{-(\alpha+\varepsilon)t}||f||_0^2, \ \ \forall \ f\in (H_0/S_\alpha)^*, \ \ t<0.
		$$ 
		In particular, we see that  $\log(||f||_t^2)+(\alpha+\varepsilon)t$ is bounded from above at $t=-\infty$. By Theorem \ref{th:main1-general-Q}, $\log(||f||_t^2)+(\alpha+\varepsilon)t$ is convex in $t$. Hence it must be increasing. Letting $\varepsilon$ go to zero, we find that $ \alpha  t + \log (||f||_t^2)$ is increasing for every $f\in (H_0/S_\alpha)^*$. Thus  $e^{-\alpha t } ||[u]||_t^2$ is decreasing for every $[u]\in H_0/S_\alpha$.
	\end{proof}

	\noindent
	\textbf{Remark.} \emph{In the case that $\dim H_0<\infty$, \eqref{eq:Bo20} is precisely \cite[Lemma 2.3]{Bern20}.}

	\section{Proof of the Jet selection theorem in a simple setting}
	
	In this section, we prove the Jet selection theorem (Theorem \ref{th:L21}) in a special case, namely unit ball (local) case. The full proof, which depends heavily on the Demailly approximation theory \cite{D} and the $\dbar$-$L^2$ theory \cite{wang} on qck manifolds, can be found in page 89-91 of \cite{Nguyen}. In the unit ball case, the Jet selection theorem (Theorem \ref{th:L21}) is the following:
	
	\begin{theorem}\label{th:h1} Let $F$ be a holomorphic function on the unit ball $\mathbb B$ in $\mathbb C^n$, $\phi$ be a plurisubharmonic function on $\mathbb B$, and $G\leq 0$ be a function on $\mathbb B$ such that $\phi+ \lambda G$ is  plurisubharmonic
		for some positive constant $\lambda >0$. 
		Then for every $t<0$ and $0\leq \alpha < \lambda$ we can find a holomorphic function $\tilde F $ on $\mathbb B$ such that 
		\begin{equation}\label{eq:h11}
			\int_{\mathbb B} |\tilde F-F|^2 e^{-\phi-\alpha G} \leq  \frac{9 \lambda(1-t)^{\lambda -\alpha+1} e^{-2\alpha t}}{\lambda-\alpha}  ||F||_t^2   + 2 e^{-\alpha t} \int_\mathbb B |F|^2 e^{-\phi}
		\end{equation}
		and
		\begin{equation}\label{eq:h12}
			\int_{\mathbb B} |\tilde F|^2 e^{-\phi} \leq   \frac{4\lambda(1-t)^{\lambda -\alpha+1} e^{-\alpha t}}{\lambda-\alpha} ||F||_t^2,
		\end{equation}
		where
		$
		||F||_t^2:= \displaystyle\int_{\mathbb B} |F|^2 e^{-\phi -\lambda \max\{G-t, 0\} } .
		$
	\end{theorem}
	
	\begin{proof} We shall only prove the case when $G$ and $\phi$ are smooth. For the general case, one may use the convolution with a smoothing kernel to replace $\phi$ and $G$ with $\phi_\varepsilon$ and $G_\varepsilon$. Considering a slightly smaller ball, one may assume that $\phi_\varepsilon$ and  $G_\varepsilon$ are well defined on $\mathbb B$. Since we can choose a rotation invariant smoothing kernel, the submean inequality implies that for every $0\leq \alpha \leq  \lambda$,  $\phi+\alpha G$ is the decreasing limit of $\phi_\varepsilon +\alpha G_\varepsilon$ as $\varepsilon$ goes to zero. In particular, 
		$$
		\phi + \lambda \max\{G-t, 0\} = \max\{\phi+ \lambda G-\lambda t, \phi\}
		$$
		is the decreasing limit of $\phi_\varepsilon + \lambda \max\{G_\varepsilon-t, 0\} $, and thus the general case follows by taking a weak limit of $\tilde F$. In the proof below, one may also replace $\max$ by the regularized $\max$ function. Put
		\begin{equation}\label{eq:sigma-chi1}
			\sigma(s):=\int_{0}^s 1- e^{-\lambda \max\{x-t, 0\} }\, dx.
		\end{equation}
		Then
		$$
		\sigma (0)= 0,  \ \  \sigma'(s)= 1-e^{-\lambda \max\{s-t, 0\}},
		$$
		hence $0\leq \sigma' \leq 1$ and $\sigma'(s)=0$ when $s\leq t$. Thus we have
		\begin{equation}\label{eq:x0}
			t \leq\sigma(t) =\sigma(-\infty) \leq \sigma(G) \leq 0
		\end{equation}
		and
		\begin{equation}\label{eq:x1}
			\sigma''(G)= \lambda  e^{-\lambda (G-t)}  \ \ \text{as $G>t$}.
		\end{equation}
		Put
		$$
		\psi:= -\log(1-\sigma(G)).
		$$
		We then have
		$$
		i\partial\dbar \psi  = \frac{i\partial\dbar \sigma(G)}{1-\sigma(G)} + i\partial \psi \wedge \dbar \psi \geq \frac{\sigma'(G) \, i \partial\dbar G + \sigma''(G) \, i\partial G \wedge \dbar G}{1-\sigma(G)}.
		$$
		Hence our assumption gives
		$$
		i\partial\dbar ( \phi +\alpha G + (\lambda -\alpha) \psi)  \geq  \Theta:=\frac{(\lambda-\alpha)   \sigma''(G) \, i\partial G \wedge \dbar G}{1-\sigma(G)} \geq 0. 
		$$
		Applying the H\"ormander $L^2$-estimate (see \cite[Theorem 6.2]{Bo-dbar}), we get  $$\dbar u = \dbar ((1-\sigma'(G)) F)$$ with
		\begin{equation}\label{eq:x2}
			\int_{\mathbb B} |u|^2 e^{-\phi-\alpha G-  (\lambda -\alpha) \psi } \leq \int_{\mathbb B} |\dbar ((1-\sigma'(G)) F)|^2_{\Theta} e^{-\phi-\alpha G-  (\lambda -\alpha) \psi }.
		\end{equation}
		Since $\sigma'(G) =0$ when $G\leq t$, \eqref{eq:x1} and \eqref{eq:x0} imply 
		\begin{equation}\label{eq:x20}
			|\dbar ((1-\sigma'(G)) F)|^2_{\Theta} \, e^{-\alpha G-  (\lambda -\alpha) \psi } \leq   \frac{\lambda(1-t)^{\lambda -\alpha+1} e^{-\alpha t}}{\lambda-\alpha} \mathbf 1_{G>t} \,e^{-\lambda (G-t)} |F|^2.
		\end{equation}
		Hence \eqref{eq:x2} gives (note that $\psi \leq 0$)
		\begin{equation}\label{eq:x3}
			\int_{\mathbb B} |u|^2 e^{-\phi-\alpha G } \leq  \frac{\lambda(1-t)^{\lambda -\alpha+1} e^{-\alpha t}}{\lambda-\alpha} ||F||_t^2. 
		\end{equation}
		Taking $\tilde F = (1-\sigma'(G)) F -u $, we get
		\begin{equation}\label{eq:x30}
			\int_{\mathbb B} |\tilde F - (1-\sigma'(G)) F|^2e^{-\phi-\alpha G } \leq  \frac{\lambda(1-t)^{\lambda -\alpha+1} e^{-\alpha t}}{\lambda-\alpha}   ||F||_t^2. 
		\end{equation}
		Since $G\leq 0$, the above inequality gives
		\begin{align*}
			\int_\mathbb B |\tilde F|^2 e^{-\phi} & \leq 2 \int_\mathbb B  |\tilde F - (1-\sigma'(G)) F  |^2 e^{-\phi}+2\int_\mathbb B  |(1-\sigma'(G)) F  |^2 e^{-\phi} \\
			& \leq \frac{2\lambda(1-t)^{\lambda -\alpha+1} e^{-\alpha t}}{\lambda-\alpha}   ||F||_t^2  + 2 \int_\mathbb B  |(1-\sigma'(G)) F  |^2 e^{-\phi} \\
			& = \frac{2\lambda(1-t)^{\lambda -\alpha+1} e^{-\alpha t}}{\lambda-\alpha}   ||F||_t^2 + 2  \int_\mathbb B  |F|^2 e^{-\phi -2\lambda \max\{G-t, 0\} }   \\ 
			& \leq  \left(\frac{2\lambda(1-t)^{\lambda -\alpha+1} e^{-\alpha t}}{\lambda-\alpha}  +2\right)  ||F||_t^2 \\
			& \leq  \frac{4\lambda(1-t)^{\lambda -\alpha+1} e^{-\alpha t}}{\lambda-\alpha}  ||F||_t^2.
		\end{align*}
		Thus \eqref{eq:h12} follows. To prove \eqref{eq:h11}, note that \eqref{eq:x30} gives
		\begin{equation}\label{eq:220}
			\int_{G<t} |\tilde F - F|^2e^{-\phi-\alpha G } \leq  \frac{\lambda(1-t)^{\lambda -\alpha+1} e^{-\alpha t}}{\lambda-\alpha}   ||F||_t^2.
		\end{equation}
		Hence together with
		\begin{align*}
			\int_{G\geq t} |\tilde F - F|^2e^{-\phi-\alpha G } & \leq e^{-\alpha t}\int_{G\geq t} |\tilde F - F|^2e^{-\phi} \\
			& \leq 2 e^{-\alpha t} \left(\int_\mathbb B |\tilde F|^2 e^{-\phi} +\int_\mathbb B |F|^2 e^{-\phi} \right),
		\end{align*}
		we see that \eqref{eq:h11} follows from \eqref{eq:220} and \eqref{eq:h12}.
	\end{proof}

	\section{Demailly-Koll\'ar semi-continuity and a Donnelly-Fefferman proof of the strong openness theorem}
	This section is the final section of the paper, and on further applications and results. In here, we shall use Theorem \ref{th:h1} to give a proof of the Demailly-Koll\'ar semi-continuity theorem (see Theorem \ref{th:DK}), and a Donnelly-Fefferman version of Theorem \ref{th:h1} (see Corollary \ref{co:DF}) to give a second proof of Guan-Zhou's strong openness theorem (see Theorem \ref{th:so-DF}).
	
	\subsection{The semi-continuity theorem of Demailly-Koll\'ar}
	
	In \cite{Nad} Nadel obtained a nice criterion for the existence of K\"ahler-Einstein metrics on certain Fano manifolds using his multiplier ideal sheaf. Later, Demailly-Koll\'ar \cite{DK} found a simpler proof of Nadel's criterion using the following semi-continuity result (see \cite[Theorem 0.2]{DK} for the $\phi=0$ case). For the reader's convenience, we shall include a proof here using Theorem \ref{th:h1}, but the idea is already implicitly included in \cite{DK}. 
	
	\begin{theorem}\label{th:DK} Let $\phi$ be a psh function on the unit ball $\mathbb B$ in $\mathbb C^n$. Let $G, G_j\leq 0$ such that
		$$
		\text{$\phi+\lambda G_j$ and $\phi+\lambda G$ are plurisubharmonic}
		$$
		on $\mathbb B$ for some constant $\lambda>1$, and assume that 
		\begin{equation}\label{eq:DK0}
			\int_{\mathbb B} e^{-\phi-\beta G}<\infty, \ \ \lim_{j\to\infty} \int_{\mathbb B} |G_j-G|=0,
		\end{equation}
		for some $1<\beta<\lambda$. Then
		\begin{equation}\label{eq:DK1}
			\lim_{j\to \infty} \int_{|z|<r} |e^{-\phi-G_j}-e^{-\phi-G}|  =0, \  \ \ \forall \ 0<r<1.
		\end{equation}
	\end{theorem}
	In the proof, we use Lemmas \ref{lem:psh} and \ref{lem: lemma below} below.

	\begin{proof}[Proof of Theorem \ref{th:DK}] Assume for contradiction that \eqref{eq:DK1} is not true. Then there exist $0<r_0<1$,  $\varepsilon_0>0$ and a sub-sequence, say $G^{(1)}_{j}$, of $G_j$, with 
		\begin{equation}\label{eq:new}		
			\int_{|z|<r_0} |e^{-\phi-G^{(1)}_j}-e^{-\phi-G}|  \geq \varepsilon_0, \ \ \ \forall  \ j\geq 1.
		\end{equation}
		By Lemma \ref{lem:psh} below, we know that $G^{(1)}_j$ has a sub-sequence $G^{(2)}_j$ which converges to $G$ almost everywhere.	
		We shall use
		\begin{align}\label{eq:DK2}
			\nonumber \int   |e^{-\phi-G^{(2)}_j}-& e^{-\phi-G}|    = \int_{G^{(2)}_j \geq t} |e^{-\phi-G^{(2)}_j}-e^{-\phi-G}|+ \int_{G^{(2)}_j < t} |e^{-\phi-G^{(2)}_j}-e^{-\phi-G}| \\
			& \leq  \int_{G^{(2)}_j \geq t} |e^{-\phi-G^{(2)}_j}-e^{-\phi-G}|+\int_{G^{(2)}_j < t} e^{-\phi-G} + \int_{G^{(2)}_j < t} e^{-\phi-G^{(2)}_j}.
		\end{align}
		The Lebesgue bounded convergence theorem implies that for every fixed $t\leq -1$,
		\begin{equation}\label{eq:DK3}
			\limsup_{j\to \infty}  \left(\int_{G^{(2)}_j \geq t} |e^{-\phi-G^{(2)}_j}-e^{-\phi-G}|+ \int_{G^{(2)}_j < t} e^{-\phi-G} \right)\leq  \int_{G < t+1} e^{-\phi-G}.
		\end{equation}
		The main difficulty is to control the last term $\int_{G^{(2)}_j < t} e^{-\phi-G^{(2)}_j}$ in \eqref{eq:DK2}, and the idea is to use Theorem \ref{th:h1} with $F=1$, $G=G^{(2)}_j$  and $\alpha=1$. Denote $\tilde F$ by $F_{j, t}$. Then \eqref{eq:h12} gives
		\begin{equation}\label{eq:DK4}
			\int_{\mathbb B} |F_{j, t}|^2 e^{-\phi} \leq \frac{4\lambda(1-t)^{\lambda} e^{-t}}{\lambda-1}\, \int_{\mathbb B} e^{-\phi-\lambda\max\{G^{(2)}_j-t, 0\}}
		\end{equation}
		and \eqref{eq:220} gives
		\begin{equation}\label{eq:DK5}
			\int_{G^{(2)}_j <t} |F_{j, t} -1|^2 e^{-\phi-G^{(2)}_j} \leq   \frac{\lambda(1-t)^{\lambda } e^{-t}}{\lambda-1} \int_{\mathbb B} e^{-\phi-\lambda\max\{G^{(2)}_j-t, 0\}}.
		\end{equation}
		By the Lebesgue bounded convergence theorem we have
		\begin{equation}\label{eq:It}
			\lim_{j\to \infty} \int_{\mathbb B} e^{-\phi-\lambda\max\{G^{(2)}_j-t, 0\}}  =  \int_{\mathbb B} e^{-\phi-\lambda\max\{G-t, 0\}}. 
		\end{equation}
		Hence for every fixed $t \leq -1$, we can choose $j(t)$ such that
		$$
		\int_{\mathbb B} e^{-\phi-\lambda\max\{G^{(2)}_j-t, 0\}}  \leq 2  \int_{\mathbb B} e^{-\phi-\lambda\max\{G-t, 0\}},  \ \ \forall \ j\geq j(t).
		$$
		Lemma \ref{lem: lemma below} below further gives
		$$
		e^{-\beta t}\int_{\mathbb B} e^{-\phi-\lambda\max\{G^{(2)}_j-t, 0\}}  \leq \frac{4\lambda}{\lambda-\beta}  \int_{\mathbb B} e^{-\phi-\beta G},  \ \ \forall \ j\geq j(t), \ t\leq -1.
		$$
		Hence \eqref{eq:DK4} implies that there exists $t_0 \leq -1$ such that
		$$
		\sup_{|z|<r} |F_{j, t}(z)| \leq \frac12 , \ \ \forall \ j\geq j(t), \ t\leq t_0.
		$$
		Thus by \eqref{eq:DK5} (when $t_0$ is small enough)
		$$
		\int_{G^{(2)}_j <t, |z|<r}  e^{-\phi-G^{(2)}_j}  \leq e^{(\beta-1)t/2} , \ \ \forall \ j\geq j(t), \ t\leq t_0.
		$$
		By \eqref{eq:DK3}, we can choose $j(t)$ such that  for all $j\geq j(t)$:
		$$
		\int_{G^{(2)}_j \geq t, |z|<r} |e^{-\phi-G^{(2)}_j}-e^{-\phi-G}|+ \int_{G^{(2)}_j < t, |z|<r} e^{-\phi-G} \leq 2 \int_{G < t+1, |z|<r} e^{-\phi-G}.
		$$
		Thus \eqref{eq:DK2} gives
		$$
		\int_{|z|<r}  |e^{-\phi-G^{(2)}_j}-e^{-\phi-G}|   \leq  e^{(\beta-1)t/2}+ 2 \int_{G < t+1, |z|<r} e^{-\phi-G} , \ \ \forall \ j\geq j(t), \ t\leq t_0,
		$$
		which contradicts \eqref{eq:new}. This contradiction shows that \eqref{eq:DK1} must be true.
	\end{proof}

	\begin{lemma}\label{lem:psh} Let $(X, \mu)$ be a measure space, let $f_j$ be a sequence of measurable functions
		on $X$, and suppose that $f_j \to f$ in $L^1$. Then there exists a sub-sequence of $f_j$
		that converges pointwise to $f$ almost everywhere on $X$.
	\end{lemma}
	
	\begin{proof} Let us choose a sub-sequence $f^{(1)}_j$ such that 
		$$
		\sum_{j=1}^\infty \int_X |f^{(1)}_{j+1}-f^{(1)}_j| \, d\mu<\infty.
		$$
		By the monotone convergence theorem, we know that
		$$
		\int_X \sum_{j=1}^\infty |f^{(1)}_{j+1}-f^{(1)}_j| \, d\mu = \sum_{j=1}^\infty \int_X |f^{(1)}_{j+1}-f^{(1)}_j| \, d\mu<\infty.
		$$
		Thus $E:=\left\lbrace \sum_{j=1}^\infty |f^{(1)}_{j+1}-f^{(1)}_j| =\infty \right\rbrace$ is a set of measure zero and $f^{(1)}_j$ converge pointwise to a measurable function $g$ on $X\setminus E$. The dominated convergence theorem further implies that $f^{(1)}_j\to g$ in $L^1$. Thus $g=f$ almost everywhere and $f^{(1)}_j$
		converge pointwise to $f$ almost everywhere on $X$. 
	\end{proof}
	
	\noindent
	\textbf{Remark.} \emph{In general, $L^1$ convergence does not imply almost everywhere convergence, even for plurisubharmonic functions. The following is a counterexample found by the anonymous referee: Take a radial smooth function $0\leq \chi \leq 1$ with compact support in the unit disc and $\chi(w)=1$ for all $|w|<1/2$. Then there exists a positive integer $k$ such that for each $0<r\leq 1/k$, $a\in\mathbb C$,
		$$
		\phi_{a,r}(z):=
		\begin{cases}
			\max\{-1, |z|^2+ r^3 \chi((z-a)/r) \log|z-a| \} &  |z-a|<r; \\
			|z|^2 &  |z-a|\geq r	.
		\end{cases}
		$$
		is subharmonic on $\mathbb C$. Now for any integer $N \geq k$, there exist finite points $a^N_j$, $1 \leq j\leq j_N$ in the unit disc such that these balls $B(a^N_j, e^{-2N^3})$ cover the unit disc. 
		Then the following sequence of subharmonic functions
		\begin{align*}
			\phi(a^k_1 , 1/k), \ \phi(a^k_2 , 1/k), \cdots, & \phi(a^k_{j_k}, 1/k) \\ 
			\phi(a^{k+1}_1 , 1/(k+1)), \ \phi(a^{k+1}_2 , 1/(k+1)), \cdots, & \phi(a^{k+1}_{j_{k+1}}, 1/(k+1)), \cdots \\
			\phi(a^{N}_1 , 1/N), \ \phi(a^N_2 , 1/N), \cdots, & \phi(a^{N}_{j_{N}}, 1/N), \cdots
		\end{align*}
		converges to $|z|^2$ in $L^1$ on a large disc, but does not converge at any point in the unit disc since $\phi(a^{N}_j , 1/N) =-1$ in the ball $B(a^N_j, e^{-2N^3})$.}
	\begin{lemma}\label{lem: lemma below} For every $t\leq -1$ we have 
		$$
		e^{-\beta t}  \int_{\mathbb B} e^{-\phi-\lambda\max\{G-t, 0\}}   \leq \frac{2\lambda}{\lambda-\beta}  \int_{\mathbb B} e^{-\phi-\beta G}.
		$$ 
	\end{lemma}
	
	\begin{proof} Put 
		$$
		I(t):=\int_{\mathbb B} e^{-\phi-\lambda\max\{G-t, 0\}}.		
		$$	
		Notice that the $F=1$ case of \eqref{eq:so4}
		gives
		\begin{equation}\label{eq:so4-new}
			\int_{-\infty}^0  I(t) e^{-\beta t}\, dt   \leq  \frac{\lambda}{(\lambda-\beta)\beta}  \int_{\mathbb B} e^{-\phi-\beta G}.
		\end{equation}
		Since $I(t)$ is increasing in $t$, we have
		$$
		\int_{-\infty}^0  I(s) e^{-\beta s}\, ds  \geq  \int_{t}^{0}  I(s) e^{-\beta s}\, ds \geq I(t) 
		\int_{t}^0  e^{-\beta s}\, ds     \geq  \frac{I(t) e^{-\beta t}}{2\beta}
		$$
		for every $t\leq -1$. Hence the lemma follows from \eqref{eq:so4-new}.
	\end{proof}
	
	\noindent
	\textbf{Remark.} \emph{Thanks to Berndtsson's solution \cite{Bern15b} of the openness conjecture, the following $\beta=1$ case of Theorem \ref{th:DK} also holds:}
	
	\begin{theorem}\label{th:DK-new} Let $\phi$ be a plurisubharmonic function on the unit ball $\mathbb B$ in $\mathbb C^n$, and let $G, G_j\leq 0$ be such that
		$$
		\text{$\phi+\lambda G_j$ and $\phi+\lambda G$ are plurisubharmonic}
		$$
		on $\mathbb B$ for some constant $\lambda>1$. Assume that 
		\begin{equation}\label{eq:DK0-new}
			\int_{\mathbb B} e^{-\phi-G}<\infty, \ \ \lim_{j\to\infty} \int_{\mathbb B} |G_j-G|=0.
		\end{equation}
		Then
		\begin{equation}\label{eq:DK1-new}
			\lim_{j\to \infty} \int_{|z|<r} |e^{-\phi-G_j}-e^{-\phi-G}|  =0, \  \ \ \forall \ 0<r<1.
		\end{equation}
	\end{theorem}
	
	\subsection{A Donnelly-Fefferman proof of the strong openness theorem}
	
	The following Donnelly-Fefferman theorem is a special ($r=1$) case of Theorem 2.1 in \cite{Bern01}:
	
	\begin{theorem}\label{th:DF} Let $\phi$ and $\psi$ be plurisubharmonic functions of class $C^2$ in the unit ball $\mathbb B$ in $\mathbb C^n$, and assume that $\phi$ satisfies
		\begin{equation}\label{eq:DF1}
			i\partial\phi\wedge \dbar\phi \leq i\partial\dbar\phi.
		\end{equation} 
		Then for any smooth $\dbar$-closed $(0,1)$-form $f$ on $\mathbb B$, there exists a solution, $u$, to 
		\begin{equation}\label{eq:DF2}
			\dbar u= f,
		\end{equation} 
		satisfying the estimate  
		\begin{equation}\label{eq:DF3}
			\int_{\mathbb B} |u|^2 e^{-\psi} \leq 4 \int_{\mathbb B} |f|^2_{i\partial\dbar\phi} e^{-\psi}.
		\end{equation}
	\end{theorem}
	
	\begin{proof} We shall follow the proof in \cite{Bern01}. Similar to the proof of Theorem \ref{th:h1}, one may assume that $\phi$ and $\psi$ are $C^2$ up to the boundary and  strictly plurisubharmonic. Let $u$ be the $L^2$-minimal solution of $\dbar (\cdot) =f$ with respect to the weight $\psi+\phi/2$, i.e.
		$$
		\int_{\mathbb B} u \bar h \,e^{-\psi-\phi/2}=0,  \ \forall  \ h \in A^2_{\psi+\phi/2}.
		$$
		Since $\phi$ is assumed to be bounded, we see that $e^{\phi/2} u$ is the $L^2$-minimal solution of the equation $\dbar (\cdot) = \dbar (e^{\phi/2} u)$ with respect to the weight $\phi+\psi$. Thus the H\"ormander estimate gives
		\begin{equation}\label{eq:DF4}
			\int_{\mathbb B} |e^{\phi/2} u|^2 e^{-\phi-\psi} \leq \int_{\mathbb B} |\dbar(e^{\phi/2} u)|_{i\partial\dbar(\phi+\psi)}^2 e^{-\phi-\psi} \leq \int_{\mathbb B} |\dbar(e^{\phi/2} u)|_{i\partial\dbar\phi}^2 e^{-\phi-\psi}.
		\end{equation}
		Note that \eqref{eq:DF1} implies
		$$
		|\dbar(e^{\phi/2} u)|_{i\partial\dbar\phi}^2 e^{-\phi} \leq 2 |f|^2_{i\partial\dbar\phi} +\frac12 |u|^2,
		$$
		and hence \eqref{eq:DF4} gives \eqref{eq:DF3}.
	\end{proof}
	
	The following corollary is a Donnelly-Fefferman version of Theorem \ref{th:h1}:
	
	\begin{corollary}\label{co:DF} Let $F$ be a holomorphic function on the unit ball $\mathbb B$ in $\mathbb C^n$, and let $G 
		\leq  0$ be a plurisubharmonic function on 
		$\mathbb B$. Then for every  $t<-1$ and $\alpha \geq 0$, we can find a holomorphic function $\tilde F $ on $\mathbb B$ such that 
		\begin{equation}\label{eq:h21}
			\int_{\mathbb B} |\tilde F-F|^2 e^{-\alpha G} \leq  24\,|t| \,e^{-2\alpha t}   ||F||_t^2 +2\, e^{-\alpha t}  \int_{\mathbb B} | F|^2
		\end{equation}
		and
		\begin{equation}\label{eq:h22}
			\int_{\mathbb B} |\tilde F|^2 \leq 10\,|t| \, e^{-\alpha t}  ||F||_t^2,
		\end{equation}
		where $||F||_t^2:=\displaystyle \int_{\mathbb B} |F|^2 e^{-  \max\{G-t, 0\}}$.
	\end{corollary}
	
	\begin{proof} Similar to the proof of Theorem \ref{th:h1}, one may assume that $G$ is smooth up to the boundary. Put
		$
		\sigma(s):=\int_{0}^s 1- e^{-\max\{x-t, 0\}} \, dx.
		$
		Applying the above Theorem \ref{th:DF} to
		$$
		\phi:=-\log(-\sigma(G)),  \ \psi:=\alpha G, \ \ v= \dbar ((1-\sigma'(G)) F),
		$$
		we get a solution $u$ of  $\dbar u = \dbar ((1-\sigma'(G)) F)$ with
		$$
		\int_{\mathbb B} |u|^2 e^{-\alpha G} \leq 4 \int_{\mathbb B} |\sigma''(G) \dbar G|^2_{i\partial\dbar \phi }\cdot |F|^2 e^{-\alpha G}.
		$$
		Notice that
		$$
		i\partial\dbar \phi \geq  \frac{i\partial\dbar  (\sigma(G))}{-\sigma(G)} \geq \frac{\sigma''(G) i\partial G \wedge  \dbar G}{ -\sigma(G)}
		$$
		gives
		$$
		|\sigma''(G) \dbar G|^2_{i\partial\dbar \phi } \leq  \sigma''(G) |\sigma(G)| \leq |t| \, e^{-\max\{G-t, 0\}} . 
		$$
		Hence we have
		$$
		\int_{\mathbb B} |u|^2 e^{-\alpha G} \leq  4 |t|  \int_{G\geq t} |F|^2 e^{-\alpha G} e^{-\max\{G-t, 0\}} \leq 4 |t|  e^{-\alpha t} ||F||_t^2.
		$$
		Taking $\tilde F = (1-\sigma'(G)) F -u $, similar to the proof of Theorem \ref{th:h1}, one may verify that $\tilde F$ satisfies \eqref{eq:h21} and \eqref{eq:h22}.
	\end{proof}
	
	We end by showing how to use the above corollary to prove the following Guan-Zhou's strong openness theorem (see \cite{GZ0}):

	\begin{theorem}\label{th:so-DF}
		Let $F$ be a holomorphic function on the unit ball $\mathbb B$ in $\mathbb C^n$, and let $G \leq 0$ be a plurisubharmonic function on 
		$\mathbb B$. If $\displaystyle \int_\mathbb B |F|^2  \,e^{-\beta G}<\infty$ for some $\beta>0$, then there exists $\alpha>\beta$ such that $|F|^2  \,e^{-\alpha G}$ is integrable near the origin of $\mathbb B$.
	\end{theorem} 
	
	\begin{proof}
		
		Assume that $$\alpha_0:=\sup \{\alpha\geq 0: F_0 \in \mathcal I(\alpha G)_0\} <\infty,$$ 
		and
		$$
		I_0:= \inf \left\lbrace\int_{\mathbb B} |\tilde F|^2:  (\tilde F-F)_0 \in \bigcup_{\alpha>\alpha_0}  \mathcal I(\alpha G)_0 \right\rbrace.
		$$
	Step 1 in the proof of Theorem B implies that $I_0>0$. By Corollary \ref{co:DF}, we have
		\begin{equation}\label{eq:h22-new}
			0<I_0 \leq 10 \,|t|\, e^{-\alpha_0 t}   ||F||_t^2 , \ \ \forall \ t<-1.
		\end{equation}
		Since $\displaystyle \int_\mathbb B |F|^2  \,e^{-\beta G}<\infty$, we know that \eqref{eq:so4} (with $\lambda=\beta+1$) gives
		$$
		\int_{-\infty}^0 ||F||_t^2 e^{- \beta t} dt <\infty.
		$$
		By \eqref{eq:h22-new}, we obtain
		$$
		\int_{-\infty}^0 \frac{e^{- \beta t}}{|t|e^{-\alpha_0 t}} dt <\infty,
		$$
		which implies $\alpha_0>\beta$, and completes the proof.
	\end{proof}
	
	\bibliographystyle{amsalpha}

\begin{thebibliography}{A}
		
		\bibitem[Al]{Al} R. Albesiano, {\it A deformation approach to Skoda's Division Theorem}, arXiv:2212.07298.
		
		\bibitem[BG]{BG} S. Bao and Q. Guan, {\it $L^2$
			extension and effectiveness of strong openness property}, Acta
		Mathematica Sinica, English Series, {\bf 38} (2022), 1949--1964.
		
		\bibitem[BG1]{BG1} S. Bao and Q. Guan, {\it $L^2$
			extension and effectiveness of $L^p$ strong openness property}, Acta
		Mathematica Sinica, English Series, {\bf 39} (2023), 814--826.
		
		\bibitem[B1]{B96} B. Berndtsson, {\it The extension theorem of Ohsawa-Takegoshi and the theorem of Donnelly-Fefferman}, Ann. Inst. Fourier (Grenoble) {\bf 46} (1996), 1083--1094.
		
		\bibitem[B2]{B05} B. Berndtsson, {\it Integral formulas and the Ohsawa--Takegoshi extension theorem}, Sci. China
		Ser. A {\bf 48} (2005), suppl., 61--73.
		
		\bibitem[B3]{Bern01} B. Berndtsson, {\it Weighted estimates for the $\dbar$-equation}, Complex analysis and geometry (Columbus,
		OH, 1999), 43--57, Ohio State Univ. Math. Res. Inst. Publ., 9, de Gruyter, Berlin,
		2001.
		
		
		\bibitem[B4]{Bo-dbar} B. Berndtsson, {\it  An Introduction to things $\bar\partial$}, IAS/Park City Math Ser 17, Amer Math Soc, Providence R I , 2010; available in \url{http://www.math.chalmers.se/~bob/7nynot.pdf}. 
		
		\bibitem[B5]{Bern06} B. Berndtsson, {\it Subharmonicity properties of the Bergman kernel and some other functions associated to pseudoconvex domains}, Ann. Inst. Fourier (Grenoble), {\bf 56} (2006), 1633--1662.
		
		\bibitem[B6]{Bern09} B. Berndtsson, {\it Curvature of vector bundles associated to holomorphic fibrations}, Ann. Math. {\bf 169} (2009), 531--560.
		
		\bibitem[B7]{Bern15b} B. Berndtsson, {\it The openness conjecture and complex Brunn-Minkowski inequalities}, Complex geometry
		and dynamics, 29--44, Abel Symp., 10, Springer, Cham, 2015.
		
		\bibitem[B8]{Bern18} B. Berndtsson, {\it
			Complex Brunn-Minkowski theory and positivity of vector bundles}. Proceedings of the International Congress of Mathematicians-Rio de Janeiro 2018, vol. II, Invited Lectures, World Sci. Publ., Hackensack, NJ (2018), 859--884.
		
		\bibitem[B9]{Bern20}  B. Berndtsson, {\it Lelong numbers and vector bundles}, J. Geom. Anal. {\bf 30} (2020),  2361--2376.
		
		\bibitem[BL]{BL} B. Berndtsson and L. Lempert, {\it A proof of the Ohsawa--Takegoshi theorem with sharp estimates}, J. Math. Soc. Japan {\bf 68} (2016), 1461--1472.
		
		\bibitem[BCP]{BCP} B. Berndtsson, J. Cao and M. Paun, {\it On the Ohsawa-Takegoshi extension theorem}, arXiv:2002.04968.
		
		\bibitem[Bl]{Bl0}  Z. Blocki,  {\it Suita conjecture and the Ohsawa--Takegoshi extension theorem}, Invent. Math. {\bf 193}
		(2013), 149--158.
		
		\bibitem[Bl1]{Bl1} Z. Blocki,  {\it Bergman kernel and pluripotential theory, Analysis, complex geometry, and mathematical physics: in honor of Duong H. Phong}, Contemp. Math. {\bf 644}, Amer. Math. Soc., Providence, RI, 2015, 1--10.
		
		\bibitem[CDM]{CDM} J. Cao, J. P. Demailly and S. Matsumura, {\it A general extension theorem for cohomology
			classes on non reduced analytic subspaces}, Sci. China Math. {\bf 60} (2017), 949--962.
		
		\bibitem[Ch]{Chan} T. Chan, {\it On an $L^2$ extension theorem from log-canonical centres with log-canonical
			measures}, arXiv: 2008.03019.
		
		\bibitem[CC]{CC} T. Chan and Y. J. Choi, {\it Extension with log-canonical measures and an
			improvement to the plt extension of Demailly-Hacon-Paun},  Math. Ann. (2021).
		
		\bibitem[C]{Chen} B. Chen, {\it A simple proof of the Ohsawa-Takegoshi extension theorem}, arXiv:1105.2430.
		
		\bibitem[CWW]{CWW} B. Chen, J. Wu and X. Wang, {\it Ohsawa-Takegoshi type theorem and extension of plurisubharmonic functions},
		Math. Ann. {\bf 262} (2015), 305--319.
		
		\bibitem[De]{Demailly12} J.-P. Demailly, {\it Complex analytic and differential geometry}, book in \url{https://www-fourier.ujf-grenoble.fr/~demailly/manuscripts/agbook.pdf}.
		
		
		\bibitem[De1]{Dem00} J. P. Demailly, {\it Multiplier ideal sheaves and analytic methods in algebraic geometry}, School
		on Vanishing Theorems and Effective Results in Algebraic Geometry (Trieste, 2000), 1--148,
		ICTP Lect. Notes, 6, Abdus Salam Int. Cent. Theoret. Phys., Trieste, 2001.
		
		\bibitem[De2]{D} J. P. Demailly, {\it Analytic methods in algebraic geometry}, lecture notes in \url{https://www-fourier.ujf-grenoble.fr/~demailly/manuscripts/analmeth.pdf}.
		
		
		\bibitem[De3]{Dem18} J. P. Demailly, {\it Extension of holomorphic functions and cohomology classes from non reduced analytic subvarieties}, Geometric Complex Analysis, Springer Proceedings in Mathematics and Statistics 246, Springer, Singapore, 2018, 97--113.
		
		\bibitem[DHP]{DHP} J. P. Demailly, C. D. Hacon and M.  Paun, {\it Extension theorems, non-vanishing and the existence of good minimal models}, Acta Mathematica {\bf 210} (2013), 203--259.
		
		\bibitem[DK]{DK} J. P. Demailly and J. Koll\'ar, {\it Semicontinuity of complex singularity exponents and K\"ahler-Einstein metrics on Fano orbifolds}, Ann. Sci. Ecole Norm. Sup. {\bf 34} (2001), 525--556.
		
		\bibitem[FJ]{FJ} C. Favre and M. Jonsson, {\it Valuations and multiplier ideals}, J. Am. Math. Soc. {\bf 18} (2005), 655--684.
		
		
		\bibitem[Gu]{G} Q. Guan, {\it A sharp effectiveness result of Demailly's strong openness conjecture}, Adv. Math. {\bf 348} (2019), 51--80.
		
		\bibitem[Gu1]{G1} Q. Guan, {\it General concavity of minimal $L^2$ integrals related to multiplier sheaves}, arXiv:1811.03261v4.
		
		\bibitem[GM]{GM} Q. A. Guan and Z. T. Mi, {\it Concavity of minimal $L^2$ integrals related to multipler ideal sheaves}, Peking Math J. {\bf 6} (2023), 393--457.
		
		
		\bibitem[GY]{GY} Q. A. Guan and Z. Yuan, {\it Effectiveness of strong openness property in $L^p$}, Proc. Amer. Math. Soc. {\bf 151} (2023), 4331-4339. 
		
		
		
		\bibitem[GZ1]{GZ} Q. Guan and X. Zhou, {\it A solution of an $L^2$ extension problem with an optimal estimate and
			applications}, Ann. of Math. (2), {\bf 181} (2015), 1139--1208.
		
		\bibitem[GZ2]{GZ0}  Q. Guan and X. Zhou, {\it A proof of Demailly's strong openness conjecture}, Ann. Math. {\bf 182} (2015) 605--616.
		
		\bibitem[GZ3]{GZ1} Q. Guan and X. Zhou, {\it Effectiveness of Demailly's strong openness conjecture and related
			problems}, Invent. Math. {\bf 202} (2015), 635--676.
		
		\bibitem[Hi]{Hiep} P. H. Hiep, {\it The weighted log canonical threshold}, C. R. Math. Acad. Sci. Paris {\bf 352} (2014), 283--288.
		
		\bibitem[Ho]{Hosono0} G. Hosono, {\it On sharper estimates of Ohsawa--Takegoshi $L^2$-extension theorem}, J. Math.
		Soc. Japan {\bf 71} (2019), 909--914.
		
		\bibitem[JM]{JM} M. Jonsson and M. Mustata, {\it Valuations and asymptotic invariants for sequences of ideals},
		Annales de l'Institut Fourier {\bf 62} (2012), 2145--2209.
		
		
		\bibitem[K1]{K1} D. Kim {\it $L^2$ extension of adjoint line bundle sections}, Ann. Inst. Fourier (Grenoble) {\bf 60} (2010), 1435--1477.
		
		\bibitem[K2]{K2} D. Kim, {\it A remark on $L^2$ extension theorems}, Bull. Korean Math. Soc. {\bf 58} (2021), 1235--1245. 
		
		\bibitem[KS]{KS} D. Kim and H. Seo, {\it On $L^2$ extension from singular hypersurfaces}, 	arXiv:2104.03554.
		
		
		\bibitem[Le1]{Lempert} L. Lempert, {\it Modules of square integrable holomorphic germs}, Analysis meets geometry, 311--333, Trends Math.,
		Birkhuser/Springer, Cham, 2017.
		
		\bibitem[Le2]{Le2} L. Lempert, {\it Extrapolation, a technique to estimate}, arXiv:1507.06216.
		
		\bibitem[Le3]{Le3} L. Lempert, {\it A maximal principle for Hermitian (and other) metrics}, Proc. Am. Math. Soc. {\bf 143} (2015), 2193--2200.
		
		
		
		
		\bibitem[Ma]{Ma} S. Matsumura, {\it An injectivity theorem with multiplier ideal sheaves of singular metrics with transcendental singularities}, J. Algebraic Geom. {\bf 27} (2018), 305--337.
		
		\bibitem[MV1]{MV1} J. D. McNeal and D. Varolin, {\it Analytic inversion of adjunction: $L^2$ extension theorems with gain}, Ann. Inst. Fourier (Grenoble) {\bf 57} (2007), 703--718.
		
		
		\bibitem[MV2]{MV2}  J. D. McNeal and D. Varolin,{\it $L^2$ Extension of dbar-closed forms from a hypersurface}, 
		J. Anal. Math. {\bf 139} (2019), 421--451.
		
		
		\bibitem[MV3]{MV3}  J. D. McNeal and D. Varolin,{\it Extension of Jets With $L^2$ estimates, and an application}, arXiv:1707.04483.
		
		\bibitem[Na]{Nad} A.M. Nadel, {\it Multiplier ideal sheaves and K\"ahler-Einstein metrics of positive scalar curvature}, Proc. Nat. Acad. Sci. U.S.A., {\bf 86} (1989), 7299--7300 and Annals of Math.,
		{\bf 132} (1990), 549--596.
		
		\bibitem[Ng]{Nguyen} T. Nguyen, {\it A Hilbert bundles description of complex Brunn-Minkowski theory}, arXiv:2305.07476v1.
		
		\bibitem[NW]{NW} T. Nguyen and X. Wang, {\it On a remark by Ohsawa related to the Berndtsson-Lempert method for $L^2$ holomorphic extension}, to appear in Ark. Mat.
		
		\bibitem[O2]{O2} T. Ohsawa, {\it On the extension of $L^2$ holomorphic functions II}, Publ. Res. Inst. Math.
		Sci. {\bf 24} (1988), 265--275.
		
		\bibitem[O3]{O3} T. Ohsawa, {\it On the extension of $L^2$ holomorphic functions III. Negligible weights}, Math. Z. {\bf 219} (1995), 215--225.
		
		
		\bibitem[Oh]{Ohsawa} T. Ohsawa, {\it On the extension of $L^2$-holomorphic functions V, effect of generalization}, Nagoya Math J. {\bf 161}
		(2001), 1--21.
		
		\bibitem[OT]{OT} T. Ohsawa and K. Takegoshi, {\it On the extension of $L^2$-holomorphic functions}, Math. Z. {\bf 195} (1987), 197--204.
		
		\bibitem[Siu]{Siu} Y. T. Siu, {\it Invariance of plurigenera}, Invent. Math. {\bf 134} (1998), 661--673.
		
		
		
		\bibitem[Su]{Suita} N. Suita, {\it Capacities and kernels on Riemann surfaces}, Arch. Rational Mech. Anal. {\bf 46} (1972), 212--217.
		
		
		
		\bibitem[W1]{wang21} X. Wang, {\it An explicit estimate of the Bergman kernel for positive line bundles}, to appear in Ann. Fac. Sci. Toulouse Math.
		
		\bibitem[W2]{wang} X. Wang, {\it H\"ormander $\dbar$ theory on quasi-complete K\"ahler manifold}, lecture notes in \url{https://folk.ntnu.no/xuwa/res/course_all.pdf}.
		
		
		\bibitem[ZZ]{ZZ}  X. Zhou and L. Zhu, {\it Extension of cohomology classes and holomorphic sections defined on subvarieties}, arXiv: 1909.08822, to appear in J. Algebraic Geom.
		
	\end{thebibliography}

\end{document}